\crefname{hypothesis}{Hypothesis}{Hypotheses}
\crefname{enumi}{Step}{Steps}
\title{Match-based solution of general parametric eigenvalue problems\thanks{\today~version.\funding{Alessandro Borghi acknowledges funding by the Deutsche Forschungsgemeinschaft (DFG, German Research Foundation) - 384950143 as part of GRK2433 DAEDALUS.}
}}
\author{Davide Pradovera\thanks{Department of Mathematics, University of Vienna, Vienna, Austria \& Department of Mathematics, KTH Royal Institute of Technology, Stockholm, Sweden
  (\email{davidepr@kth.se}, \url{https://pradovera.github.io}).}
\and Alessandro Borghi\thanks{Institute of Mathematics, Technical University Berlin, Berlin, Germany
  (\email{borghi@tu-berlin.de}, \url{https://www.tu.berlin/fgmso/alessandro-borghi}).}}
\newcommand*{\abs}[1]{\left|#1\right|}
\newcommand{\vx}{\mathbf{x}}
\newcommand{\vA}{\mathbf{A}}
\newcommand{\vB}{\mathbf{B}}
\newcommand{\vF}{\mathbf{F}}
\newcommand{\vG}{\mathbf{G}}
\newcommand{\vH}{\mathbf{H}}
\newcommand{\vI}{\mathbf{I}}
\newcommand{\vJ}{\mathbf{J}}
\newcommand{\vL}{\mathbf{L}}
\newcommand{\vR}{\mathbf{R}}
\newcommand{\vSigma}{\bm\Sigma}
\newcommand{\vU}{\mathbf{U}}
\newcommand{\vV}{\mathbf{V}}
\newcommand{\vX}{\mathbf{X}}
\newcommand{\vY}{\mathbf{Y}}
\newcommand{\vzero}{{\bm 0}}
\newcommand{\bC}{\mathbb{C}}
\newcommand{\bN}{\mathbb{N}}
\newcommand{\bR}{\mathbb{R}}
\newcommand*{\addFileDependency}[1]{
  \typeout{(#1)}
  \@addtofilelist{#1}
  \IfFileExists{#1}{}{\typeout{No file #1.}}
}
\begin{document}

\maketitle

\begin{abstract}
    We describe a novel algorithm for solving general parametric (nonlinear) eigenvalue problems. Our method has two steps: first, high-accuracy solutions of \emph{non-parametric} versions of the problem are gathered at some values of the parameters; these are then combined to obtain global approximations of the parametric eigenvalues. To gather the non-parametric data, we use non-intrusive contour-integration-based methods, which, however, cannot track eigenvalues that migrate into/out of the contour as the parameter changes. Special strategies are described for performing the combination-over-parameter step despite having only partial information on such \textit{migrating} eigenvalues. Moreover, we dedicate a special focus to the approximation of eigenvalues that undergo bifurcations. Finally, we propose an adaptive strategy that allows one to effectively apply our method even without any \emph{a priori} information on the behavior of the sought-after eigenvalues. Numerical tests are performed, showing that our algorithm can achieve remarkably high approximation accuracy.
\end{abstract}

\begin{keywords}
  Parametric eigenvalue problem, nonlinear eigenvalue problem, bifurcation, contour integration, adaptive algorithm.
\end{keywords}

\begin{AMS}
65H17, 
47J10, 
35P30, 
37G10. 
\end{AMS}

\section{Introduction}\label{sec:intro}
Consider a generic parametric nonlinear eigenvalue problem
\begin{equation}\label{eq:pevp}
    \vL(\lambda,p)\vx=\vzero,\qquad\vx\neq\vzero.
\end{equation}
Above, $p\in\mathcal{P}$ denotes one or more parameters and
\begin{equation*}
\vL:\bC\times\mathcal{P}\to\bC^{n\times n},\quad n\in\bN,
\end{equation*}
is a matrix-valued function, assumed to be analytic. For all $p\in\mathcal{P}$, we seek \emph{eigenvalues} $\lambda=\lambda(p)\in\bC$ such that an \emph{eigenvector} $\vx=\vx(p)\in\bC^n\setminus\{\vzero\}$, satisfying \cref{eq:pevp}, exists.

This kind of problem differs from ``classical'' nonlinear eigenvalue problems due to the presence of the parameter $p$. In this setting, the eigenvalues become, in fact, eigenvalue \emph{curves}, subject to variations with respect to $p$. As such, solving the parametric problem \cref{eq:pevp} requires identifying a collection of complex-valued (continuous \cite{Lancaster}) functions of $p$, characterizing the evolution of the eigenvalues. See \cref{fig:qual} for an example.

\begin{figure}[t]
\centering
\includegraphics{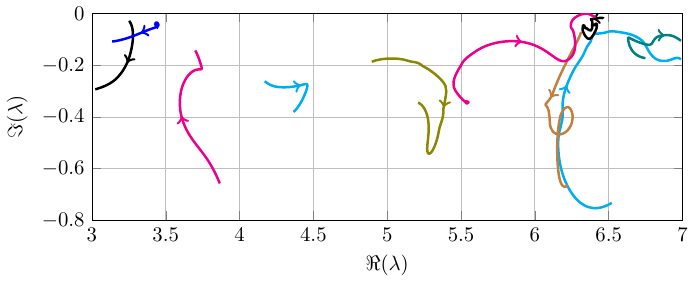}
\caption{Sample view of a collection of eigenvalue curves. The arrows denote the ``direction of movement'' along each curve as $p$ increases. The shown curves have been obtained by applying our proposed approximation algorithm to the numerical example described in \cref{sec:5:3}.}
\label{fig:qual}
\end{figure}

Parametric (nonlinear) eigenvalue problems arise in many engineering and scientific applications. Some examples include electric \cite{NP21}, quantum \cite{proj3}, and delayed-control \cite{Caliskan09,subspace1} systems. In such applications, the parameters are used to model variations (e.g., due to uncertainty) in the underlying physical model, e.g., material properties or boundary conditions.

Although it is not as popular as the non-parametric eigenvalue problem, the numerical solution of parametric eigenvalue problems has been considered before. For instance, perturbation- or continuation-based \cite{Lancaster,cont2,cont3,MoBuOv97,Pl2016} and projection-based \cite{proj3,subspace1,subspace2,subspace3,Xi21} methods have been developed for fairly general problem classes. Still, all these methods are \emph{intrusive}, i.e., they rely on knowledge of, and \emph{access to}, the structure of the function $\vL$ defining \cref{eq:pevp}.

Non-intrusive alternatives, treating the function $\vL$ as a ``black box'', have also been proposed. As notable examples, we mention collocation-based approaches\footnote{For fairness, we note that some of the mentioned references allow arbitrary dependence of $\vL$ on $p$ but assume linearity with respect to $\lambda$. Effectively, they can be considered ``partially intrusive'', covering only the parametric \emph{linear} eigenvalue problem: $\vL(\lambda,p):=\vA(p)-\lambda\vB(p)$.} \cite{NP21,Andreev2012,bertrand_datadriven_2023,hakula_approximate_2015}. 
Non-intrusiveness is an extremely attractive feature, since it makes a method appropriate as an off-the-shelf solver for an extremely wide family of problems. In collocation-based approaches, data is gathered at some values of the parameter (the so-called \emph{collocation points}), in the form of solutions of the eigenvalue problem for fixed $p$. Such data is then used to build approximations of the eigenvalue curves.

Collocation-based methods often struggle with irregular (e.g., bifurcating) eigenvalues, which can naturally arise even for analytic $\vL$. Another issue of these methods is related to the fact that eigenvalues may display large variations with respect to $p$. For instance, the number of computed eigenvalues may differ for two distinct collocation points, as a consequence of what we will call \textit{eigenvalue migrations}. For a more detailed explanation, we refer to \cref{sec:2:2}. This is an intrinsic problem for collocation-based approaches: if the available samples are inconsistent, the corresponding approximation strategy will likely break down or give flawed results.

In this paper, we propose a new non-intrusive collocation-based method, which extends the above-mentioned works in the following directions:
\begin{itemize}
    \item we allow migrating eigenvalues, i.e., eigenvalue curves that may ``appear'' and ``disappear'' as $p$ varies (\cref{sec:2});
    \item we describe a way to recover high approximation quality near bifurcations, despite the lack of regularity of the eigenvalue curves (\cref{sec:3});
    \item we devise strategies for flagging eigenvalue bifurcations (\cref{sec:3:1});
    \item we endow our method with an automatic non-intrusive \emph{adaptive} strategy for selecting the collocation points (\cref{sec:4}).
\end{itemize}
The last item is particularly relevant for practitioners: adaptivity is crucial in problems where no \emph{a priori} information is available on the behavior (e.g., in terms of regularity or migrations) of the sought-after eigenvalue curves. See our numerical examples in \cref{sec:5}. To achieve all this, we develop \emph{ad hoc} procedures, taking full advantage of the structure of the available eigenvalue data on which the method relies.

Throughout this work, we restrict our attention to the single-parameter case: $\mathcal{P}:=[p_{\textrm{min}},p_{\textrm{max}}]\subset\bR$. Moreover, we focus on finding only the eigen\emph{values}, disregarding eigen\emph{vectors}. The motivation behind this choice is that, once the eigenvalues are available, finding the eigenvectors is usually straightforward, requiring only the identification of the null space of a matrix. A discussion on possible extensions to more parameters and to eigenvector approximation is included in \cref{sec:conclusions}.

\section{Solving non-parametric eigenvalue problems by contour integration}\label{sec:1}
Before considering the parametric eigenvalue problem \cref{eq:pevp}, we look at a non-parametric one:
\begin{equation}\label{eq:nep}
    \vF(\lambda)\vx=\vzero,\qquad\vx\neq\vzero,
\end{equation}
with $\vF:\bC\to\bC^{n\times n}$ analytic. As outlined above, and as will become more apparent in \cref{sec:2}, our proposed method for addressing \emph{parametric} eigenvalue problems relies on collecting solutions of \emph{non-parametric} eigenvalue problems, whose matrices $\vF(\cdot)$ coincide with $\vL(\cdot,p_j)$, for $p_1,\ldots,p_S\in\mathcal{P}$. For this reason, solving problems like \cref{eq:nep} accurately is crucial for our ultimate parametric endeavor.

Due to the nonlinear nature of $\vF$, this problem can have finitely many, infinitely many, or no solutions. For a thorough overview of this kind of problem, we refer to \cite{GuTi17}, where several effective algorithms for solving \cref{eq:nep} are also presented. Among them, here we restrict our attention to contour-integration-based eigenvalue solvers. This choice is due to the versatility of these methods: in line with our (parametric) objectives, no \emph{a priori} knowledge of the structure of $\vF$ is needed to apply them.

Many different flavors of contour-integration-based eigenvalue solvers are available in the literature. In particular, we recall the \emph{FEAST} algorithm \cite{GaMiPo18,Po09}, the Sakurai-Sugiura method \cite{Asetal10,SaSu03}, Beyn's method \cite{Be12}, and Loewner-framework-based solvers \cite{Bretal23}. All the above-mentioned methods compute eigenvalues of $\vF$ in a specific user-defined region of the complex plane, namely, within an open set $\Omega\subset\bC$. 
In order to do this, a fundamental tool is the following theorem, which is a simplified version of, e.g., \cite[Theorem 2.8]{GuTi17}.
\begin{theorem}[Keldysh]\label{th:keldysh}
    Let $\vF\colon\bC\rightarrow\bC^{n\times n}$ be analytic over the domain $\Omega\subset\bC$, and assume that its eigenvalues in $\Omega$ are $\{\lambda^1,\dots,\lambda^N\}$. Repeated eigenvalues are allowed, to account for multiplicity. Then there exist matrices $\vX,\vY\in\bC^{n\times N}$ and $\vJ\in\bC^{N\times N}$, as well as a  matrix-valued function $\vH:\bC\to\bC^{n\times n}$, analytic over $\Omega$, such that
    \begin{equation*}
        \vF(z)^{-1}=\underbrace{\vX(z\vI-\vJ)^{-1}\vY^{\mathrm{H}}}_{\vG(z)}+\vH(z).
    \end{equation*}
    Specifically,
    \begin{equation*}
    \vJ=\begin{bmatrix}
        \lambda^1 & \star & & \\[-1mm]
        & \lambda^2 & \ddots & \\[-1mm]
        & & \ddots & \hspace{-1mm}\star \\[1mm]
        & & & \lambda^N
    \end{bmatrix},
    \end{equation*}
    with each of the super-diagonal entries (denoted by a star) being either $0$ or $1$. Any $1$-entry relates to a defective eigenvalue.
\end{theorem}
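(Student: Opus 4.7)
The plan is to establish the decomposition by analyzing the local Laurent structure of $\vF(z)^{-1}$ near each eigenvalue and then assembling the contributions into a single global resolvent. Since $\vF$ is analytic on $\Omega$ and has only finitely many eigenvalues there, $\vF(z)^{-1}$ is meromorphic on $\Omega$, with poles exactly at the $\lambda^i$. The task reduces to identifying the principal (singular) part at each pole and verifying that it admits the claimed resolvent form.

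For the local step, I would invoke the local Smith factorization of $\vF$: near any distinct eigenvalue $\mu\in\{\lambda^1,\dots,\lambda^N\}$, there exist matrix-valued functions $\mathbf{E}(z),\vR(z)$, analytic and invertible in a neighborhood of $\mu$, and nonnegative integers $k_1\le\dots\le k_n$ (the partial multiplicities) such that
\[
\vF(z) = \mathbf{E}(z)\,\diag\bigl((z-\mu)^{k_1},\dots,(z-\mu)^{k_n}\bigr)\,\vR(z).
\]
Inverting, the principal part of $\vF(z)^{-1}$ at $\mu$ is determined by the columns of $\vR(z)^{-1}$ and rows of $\mathbf{E}(z)^{-1}$, together with their derivatives at $\mu$, restricted to indices with $k_j>0$. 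I would then rewrite this principal part in the resolvent form $\vX_\mu(z\vI-\vJ_\mu)^{-1}\vY_\mu^{\mathrm{H}}$, with $\vJ_\mu$ a Jordan matrix built from blocks of sizes $k_j$ (for $j$ with $k_j>0$) at eigenvalue $\mu$, and the columns of $\vX_\mu,\vY_\mu$ the associated right and left Jordan chains. The identification works because the resolvent of a single Jordan block is an upper-triangular Toeplitz matrix whose entries are powers $(z-\mu)^{-\ell}$, matching precisely those produced by Taylor-expanding $\vR(z)^{-1}$ and $\mathbf{E}(z)^{-1}$ against the diagonal singular factors. Jordan blocks of size $\ge 2$ are exactly those corresponding to defective eigenvalues, and they contribute the $1$'s on the super-diagonal of $\vJ_\mu$.

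Finally, I would globalize by concatenating over the distinct eigenvalues $\mu_1,\dots,\mu_M$: setting $\vX=[\vX_{\mu_1},\dots,\vX_{\mu_M}]$, $\vY=[\vY_{\mu_1},\dots,\vY_{\mu_M}]$, and $\vJ=\diag(\vJ_{\mu_1},\dots,\vJ_{\mu_M})$, block-diagonality of $\vJ$ gives $\vG(z)=\sum_\mu \vX_\mu(z\vI-\vJ_\mu)^{-1}\vY_\mu^{\mathrm{H}}$, which by construction reproduces the principal part of $\vF(z)^{-1}$ at every pole in $\Omega$. Hence $\vH(z):=\vF(z)^{-1}-\vG(z)$ extends analytically across each $\lambda^i$ and is therefore analytic on all of $\Omega$. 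The total column count of $\vX$ and $\vY$ (and the size of $\vJ$) equals $\sum_{\mu,j}k_j=N$, matching the eigenvalue count with multiplicity.

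The main obstacle is the algebraic bookkeeping in the local step: extracting the Jordan chains from the Taylor coefficients of $\vR(z)^{-1}$ and $\mathbf{E}(z)^{-1}$, and verifying the biorthogonality between columns of $\vX_\mu$ and $\vY_\mu$ implicit in the resolvent identity. Existence of the local Smith factorization is classical, and the analyticity of $\vH$ is routine once the principal parts are matched; the heart of the proof lies in the explicit correspondence between Smith factors and Jordan chains.
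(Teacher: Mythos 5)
The paper does not prove this theorem: it is stated as a simplified version of \cite[Theorem 2.8]{GuTi17} and used as a black-box tool to justify contour-integration eigensolvers, so there is no in-paper proof to compare against. Your sketch is a correct outline of the standard argument underlying that reference---local Smith factorization of $\vF$ near each eigenvalue, identification of the Laurent principal part of $\vF(z)^{-1}$ with the resolvent of Jordan blocks of sizes given by the nonzero partial multiplicities (with left chains read off from the Taylor coefficients of $\mathbf{E}(z)^{-1}$ and right chains from $\vR(z)^{-1}$), concatenation over distinct eigenvalues into a block-diagonal $\vJ$ (which then has the stated bidiagonal form), and absorption of the analytic remainder into $\vH$---so it coincides with the route the paper implicitly relies on.
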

\cref{th:keldysh} states that the inverse of $\vF$ can be decomposed (over $\Omega$) as the sum of two functions, one rational and one analytic. The eigenvalues of interest are poles of the rational term $\vG$. As such, the term $\vH$ has to be filtered out. To this aim, we consider a contour integral on $\Gamma:=\partial\Omega$, involving an analytic so-called \textit{filter function} $f:\Omega\to\bC$, and then employ Cauchy's integral formula \cite[Definition 1.11]{Hi08} to obtain
\begin{align*}
\frac1{2\pi\mathrm{i}}\oint_\Gamma f(z)\vF(z)^{-1}\mathrm{d}z=&\frac1{2\pi\mathrm{i}}\oint_\Gamma f(z)\vG(z)\mathrm{d}z+\frac1{2\pi\mathrm{i}}\oint_\Gamma f(z)\vH(z)\mathrm{d}z\\
=&\frac1{2\pi\mathrm{i}}\oint_\Gamma f(z)\vX(z\vI-\vJ)^{-1}\vY^{\mathrm{H}}\mathrm{d}z=\vX f(\vJ)\vY^{\mathrm{H}}.
\end{align*}
The above formula allows one to identify eigenvalues of $\vF$ within $\Omega$ from the contour integral on the left-hand side. As such, it is the starting point of all contour-integration-based eigenvalue solvers.

For instance, in Beyn's method \cite{Be12}, given two integers $K,m\in\bN$ and a random $(n\times m)$-matrix $\vR$, one computes $2K$ matrices of size $n\times m$:
\begin{equation}\label{eq:beyn}
    \vA_k=\frac1{2\pi\mathrm{i}}\oint_\Gamma z^k\vF(z)^{-1}\vR\mathrm{d}z,\quad\text{for }k=0,1,\ldots,2K-1.
\end{equation}
Then two block-Hankel matrices of size $(Kn)\times(Km)$ are formed:
\begin{equation*}
    \vB_k=\begin{bmatrix}
        \vA_k & \vA_{k+1} & \cdots & \vA_{k+K-1}\\
        \vA_{k+1} & \vA_{k+2} & \cdots & \vA_{k+K}\\
        \vdots & \vdots & & \vdots\\
        \vA_{k+K-1} & \vA_{k+K} & \cdots & \vA_{k+2K-2}
    \end{bmatrix},\quad\text{for }k=0,1,
\end{equation*}
and a thin SVD of $\vB_0=\vU\vSigma\vV^{\mathrm{H}}$ is computed. Under minor conditions on the ranks of $\vB_0$ and $\vB_1$, simple algebraic manipulations \cite[Theorem 5.2]{GuTi17} show that the matrix $\vJ$ (with the sought-after eigenvalues on its diagonal) appears in the Jordan normal form of the easily computable matrix $\vU^{\mathrm{H}}\vB_1\vV\vSigma^{-1}$.

It is important to note that, in practice, the integrals in \cref{eq:beyn} must be computed approximately, through a quadrature rule. For instance, if $\Gamma$ is the radius-$r$ circle centered at $z_0$, we can use the trapezoidal rule to approximate
\begin{equation}\label{eq:beyn_quad}
\frac1{2\pi\mathrm{i}}\oint_\Gamma f(z)\vF(z)^{-1}\vR\mathrm{d}z\approx\frac1{N_z}\sum_{j=1}^{N_z}(z_j-z_0)f(z_j)\mathbf{F}(z_j)^{-1}\vR,
\end{equation}
with $z_j=z_0+r\exp\{2\pi\textrm{i}j/N_z\}$, $j=1,\ldots,N_z$, being uniformly spaced quadrature nodes on $\Gamma$. We refer to \cite{GuTi17,Be12} for more details on the implementation, stability, accuracy, and complexity of Beyn's method.

Before proceeding further, it is important to note the following.

\begin{remark}\label{rem:localanalytic}
    In \cref{th:keldysh}, regularity of $\vF$ is required only over $\Omega$, and not over the whole $\bC$. Effectively, due to its improved generality, this is our working assumption both for $\vF$ in the non-parametric problem \cref{eq:nep} and for $\vL$ in the parametric problem \cref{eq:pevp}. More precisely, $\vL$ is assumed analytic over $\Omega\times\mathcal{P}$.
\end{remark}

\section{Match-based parametric eigenvalue solver}\label{sec:2}

In this section, we describe our proposed approach bottom-up. Throughout this portion of the paper, we make the following assumptions: (i) all eigenvalues stay semi-simple for all values of $p$ and (ii) the collocation points are fixed in advance. These assumptions will be later removed in \cref{sec:3,sec:4}, respectively, showing the full potential of our method.

\subsection{Basic match strategy}\label{sec:2:1}

Here we look at the simplest possible instance of our method. Specifically, we consider the parametric eigenvalue problem \cref{eq:pevp} and choose \emph{two} different values of $p$, namely, $p_1$ and $p_2$. By fixing $p=p_j$, $j=1,2$, \cref{eq:pevp} becomes a \emph{non-parametric} nonlinear eigenvalue problem, solvable by a plethora of available techniques, cf.~\cref{sec:1}. Applying any such method at $p=p_1$ and $p=p_2$ yields two collections of (approximate) eigenvalues: $\{\lambda_j^i\}_{i=1}^{N_j}$, $j=1,2$. The subscript $j$ and the superscript $i$ index the different sampled $p$-points (to which we simply refer as \textit{collocation points}) and the different eigenvalues, respectively.

Before proceeding further, it is crucial to make the following assumption, through which, essentially, we require the input data of our algorithm to be reliable.

\begin{hypothesis}\label{as:exactness}
The non-parametric eigenvalue solver of choice is sufficiently accurate. Specifically, we assume that:
\begin{itemize}
    \item Non-parametric eigenvalues are identified with very low error levels, so that we may use them as ``ground truth''.
    \item Spurious effects are absent. For contour-integration-based methods (on which we focus here), this means that all eigenvalues within the region $\Omega$ are approximated with their correct (algebraic) multiplicity.
\end{itemize}
\end{hypothesis}

Note that, in general, the number of eigenvalues may be different: $N_1\neq N_2$. An in-depth discussion on this is postponed till \cref{sec:2:2}. For now, we assume that $N:=N_1=N_2$.

The ultimate goal of our algorithm is to build a family of $N$ curves $\{\tilde\lambda^i\}_{i=1}^N$: for each $i$, our approximation of the $i$-th eigenvalue curve is a complex-valued function
\begin{equation*}
\tilde\lambda^i:\mathcal{P}\to\bC.
\end{equation*}
Specifically, for every $p\in\mathcal{P}$, we wish for $\tilde\lambda^i(p)$ to be an approximate eigenvalue of \cref{eq:pevp}, for all $i=1,\ldots,N$. In the present case, since data is given only at two collocation points, our approximate eigenvalue curves are straight lines: for all $i$, $\tilde\lambda^i(p):=a^i+b^ip$, for some coefficients $a^i$ and $b^i$ to be found.

In our algorithm, we find such coefficients by imposing interpolation of the data: for every $i=1,\ldots,N$ and $j=1,2$, there must exist an index $k=k(i,j)$ such that $\tilde\lambda^k(p_j)=\lambda_j^i$. Note that we \emph{do not} require that $k=i$, since the two lists of eigenvalues may be indexed in an inconsistent way. For instance, the eigenvalue $\lambda_1^4$ might need to be paired up with $\lambda_2^3$ and, to this aim, we use a curve, say, $\tilde\lambda^6$, which satisfies the interpolation conditions $\tilde\lambda^6(p_1)=\lambda_1^4$ and $\tilde\lambda^6(p_2)=\lambda_2^3$.

For the reason just described, a preliminary step to the construction of the approximate eigenvalues $\tilde\lambda^i$ is re-indexing the data sets to make them coherent. 
Since we carry out this step by comparing the eigenvalue data 1-to-1, we refer to it as \emph{matching} the two sets of eigenvalues. More precisely, we build an $(N\times N)$-matrix $C$, where each entry $c_{ik}$ represents the distance between the $i$-th eigenvalue at $p=p_1$ and the $k$-th eigenvalue at $p=p_2$:
\begin{equation}\label{eq:cost_entry}
    c_{ik}:=\abs{\lambda_1^i-\lambda_2^k}.
\end{equation}
Then we select $N$ entries of $C$, one per row and one per column, so that their sum is minimal among all such choices. More formally, we seek two permutations of the set $\{1,\ldots,N\}$, say, $\bm\sigma$ and $\bm\tau$, that achieve the minimum of the \emph{loss function} 
\begin{equation}\label{eq:lossfunction}
\mathcal{L}({\bm\sigma},{\bm\tau})=\sum_{i=1}^Nc_{\sigma_i\tau_i}=\sum_{i=1}^N\abs{\lambda_1^{\sigma_i}-\lambda_2^{\tau_i}}
\end{equation}
over all such permutations. This can be done very efficiently, e.g., by the so-called \textit{Hungarian algorithm} \cite{Hungarian} or by casting the problem as a minimum-cost flow problem on a bipartite graph \cite{Crouse2016}.

The indices of such optimal entries yield the desired match. Specifically, for all $i=1,\ldots,N$, the $\sigma_i$-th eigenvalue at $p=p_1$ is matched to the $\tau_i$-th eigenvalue at $p=p_2$. Now all the ingredients are ready for building the approximate eigenvalue curves: we define one curve $\tilde\lambda^i$ for each $(\sigma_i,\tau_i)$ pair, as the only straight line that passes through $(p_1,\lambda_1^{\sigma_i})$ and $(p_2,\lambda_2^{\tau_i})$. A graphical representation of this is shown in \cref{fig:match}.

\begin{remark}\label{rem:vectors}
    The loss function in \eqref{eq:lossfunction} can be generalized to incorporate the eigenspace dynamics as an additional error term: if $\vx_j^i$ is a normalized eigenvector corresponding to $\lambda_j^i$ at $p=p_j$, then we may consider the loss function
    \begin{equation*}
    \mathcal{L}({\bm\sigma},{\bm\tau})=\sum_{i=1}^N\Big(\abs{\lambda_1^{\sigma_i}-\lambda_2^{\tau_i}}-\gamma\abs{\vx_1^{\sigma_i}\cdot\vx_2^{\tau_i}}\Big),
    \end{equation*}
    with $\gamma$ being a positive weight and $\cdot$ denoting the dot product (obviously, other notions of closeness between eigenspaces may be used instead). The additional eigenspace-related information can be extremely helpful in the matching process, e.g., in the case of eigenvalue crossings. However, including this additional term can sometimes add difficulties, e.g., related to a proper choice of the weight $\gamma$ or to comparing eigenvectors of different sizes (cf.~our numerical test in \cref{sec:5:3}). For the sake of simplicity (and also because it is not the main focus of this work), we ignore the eigenspace term in our discussion.
\end{remark}

\begin{figure}[t!]
\centering
\includegraphics{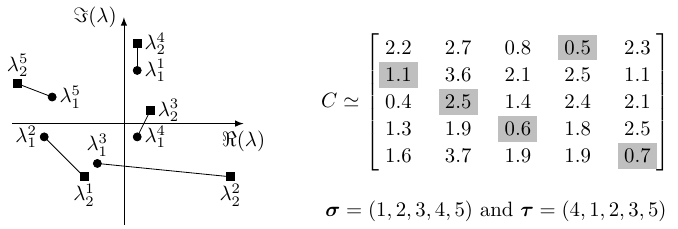}
\caption{Example of match and approximation of the eigenvalue curves. Exact eigenvalues at $p=p_1$ and $p=p_2$ are shown as circles and squares, respectively. The line segments are the approximate eigenvalue curves $\tilde\lambda^i$. On the right, the cost matrix (with entries truncated at the first decimal digit). The highlighted entries correspond to the optimal match, which is reported below the matrix.}
\label{fig:match}
\end{figure}

\subsection{Dealing with migrating eigenvalues}\label{sec:2:2}

The eigenvalues at the two collocation points $p_j$, $j=1,2$, (the data on which our method relies) may generally consist of different numbers $N_j$ of eigenvalues. For instance, if $\Delta N:=N_1-N_2>0$, more eigenvalues are available at $p=p_1$ than at $p=p_2$. For the sake of our discussion (and in line with \cref{as:exactness}), it is crucial to assume that these $\Delta N$ extra eigenvalues are not \emph{spurious}, but, in fact, they correspond to eigenvalues that leave the integration region $\Omega$ as $p$ varies from $p_1$ to $p_2$. We refer to this phenomenon as \textit{eigenvalue migration}.

From a practical viewpoint, a nonzero $\Delta N$ is a consequence of the adaptive behavior of the non-parametric eigenvalue solver of choice. For instance, based on \cref{th:keldysh}, contour-integration-based eigenvalue solvers are designed to approximate only eigenvalues \emph{within} the integration region $\Omega$. Effectively, this means that any eigenvalues that migrate out of the contour $\Gamma$ at some $p\in(p_1,p_2)$, see \cref{fig:migration} (left), are invisible at $p=p_2$.

Eigenvalue migrations do not affect only contour-integration-based approaches: several solvers for nonlinear eigenvalue problems adaptively select the number of eigenvalues through the use of, e.g., rank-based indicators \cite{proj3,Bretal23,subspace5} or other estimators \cite{pradovera_adaptive_2022}. Obviously, this can lead to eigenvalue migrations.

At this point, it is important to note that $\Delta N$ is the \emph{net} number of migrations, i.e., the difference between the outgoing and incoming eigenvalues. E.g., if $\Delta N\geq0$, given information only at $p_1$ and $p_2$, it is impossible to tell whether we have exactly $\Delta N$ outgoing eigenvalues, or $(\Delta N+1)$ outgoing and $1$ incoming eigenvalues, etc. As such, it is imperative to assume, at least for now, that, if $\Delta N\geq0$ (resp., $\Delta N\leq0$) there are no incoming (resp., outgoing) eigenvalues. We will later discuss how to solve this ambiguity in \cref{sec:2:3}, by leveraging data at further values of $p$ between $p_1$ and $p_2$.

If $\Delta N\neq 0$, the optimization problem that defines the permutations $\bm\sigma$ and $\bm\tau$ becomes asymmetric: $\bm\sigma$ and $\bm\tau$ are now permutations of the sets $\{1,\ldots,N_1\}$ and $\{1,\ldots,N_2\}$, respectively, attaining the minimum of
\begin{equation*}
\mathcal{L}({\bm\sigma},{\bm\tau})=\sum_{i=1}^{\min\{N_1,N_2\}}c_{\sigma_i\tau_i}=\sum_{i=1}^{\min\{N_1,N_2\}}\abs{\lambda_1^{\sigma_i}-\lambda_2^{\tau_i}}.
\end{equation*}
This version of the problem also admits an efficient solution, e.g., by the Hungarian algorithm.

Now, without loss of generality\footnote{If $\Delta N<0$, our discussion still applies, provided the roles of $p_1$ and $p_2$ are swapped.}, let us assume $\Delta N>0$, i.e., we consider $N_1>N_2$. The first $N_2$ entries of $\bm\sigma$ and $\bm\tau$ form a balanced match. As such, we can deal with them as described in the previous section, resulting in $N_2$ approximate eigenvalue curves $\{\tilde\lambda^i(p)\}_{i=1}^{N_2}$. On the other hand, the last $\Delta N$ entries of $\bm\sigma$ are left unused: the eigenvalues with indices $\sigma_{N_2+1},\ldots,\sigma_{N_1}$ at $p=p_1$ are not matched to any eigenvalue at $p=p_2$, since they do not appear in the loss function. For this reason, the remaining $\Delta N$ eigenvalue curves are missing the final eigenvalue in their trajectory, complicating the approximation effort.

Our solution is the following: for each leftover eigenvalue at $p=p_1$, we create a fictitious eigenvalue at $p=p_2$, which we use to define our linear approximation as usual. To model eigenvalue migration, such made-up eigenvalues should be located outside the integration region, so that the approximate eigenvalue curves also migrate as $p\to p_2$. In general, a different fictitious eigenvalue may be defined for each of the remaining $\Delta N$ eigenvalues. However, for simplicity, we always pick the same one, namely, $\overline\lambda=\infty$.

\begin{figure}[t]
\centering
\includegraphics{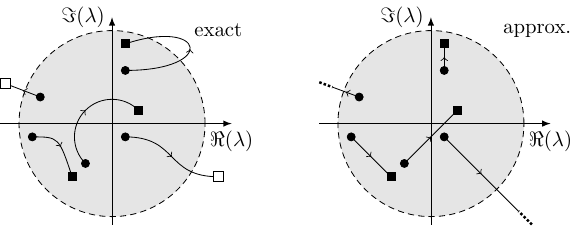}
\caption{Example of migrating eigenvalues. Arrows show the direction of eigenvalue changes as $p$ increases. Eigenvalues at $p=p_1$ and $p=p_2$ are represented as circles and squares, respectively. The empty squares are not captured by the non-parametric eigenvalue solver at $p=p_2$. Left plot: the curves denote the continuous evolution of the eigenvalues as $p$ varies from $p_1$ to $p_2$. Right plot: the line segments correspond to the approximate eigenvalues $\tilde\lambda^i$, assuming a correct match. Note that our algorithm is unaware of the curves: data is provided only at $p=p_j$, $j=1,2$. In the approximation, two eigenvalues are mapped to $\infty$ along radial lines.}
\label{fig:migration}
\end{figure}

Note that this choice is only formal, since the corresponding approximate eigenvalue curve, being the linear interpolation of a finite and an infinite value, always yields $\infty$. So, for all migrating eigenvalues, we replace the meaningless linear interpolation
\begin{equation*}
    \tilde\lambda^i(p)=\frac{p_2-p}{p_2-p_1}\lambda_1^{\sigma_i}+\frac{p-p_1}{p_2-p_1}\underbrace{\overline\lambda}_\infty=\infty
\end{equation*}
with a projective-geometry-inspired weighted harmonic mean:
\begin{equation*}
    \tilde\lambda^i(p)=\bigg(\frac{p_2-p}{p_2-p_1}(\lambda_1^{\sigma_i})^{-1}+\frac{p-p_1}{p_2-p_1}\underbrace{(\overline\lambda)^{-1}}_0\bigg)^{-1}=\frac{p_2-p_1}{p_2-p}\lambda_1^{\sigma_i}.
\end{equation*}
If the contour is a circle centered at the origin, this choice corresponds to approximated eigenvalues that move along radial lines from the origin\footnote{To recover this radial behavior when $\Omega$ is centered at some other $z_0\neq 0$, we suggest using a shifted version of the weighted harmonic mean: $\tilde\lambda^i(p)=\frac{p_2-p_1}{p_2-p}\left(\lambda_1^{\sigma_i}-z_0\right)+z_0$.}. An example of the approximate eigenvalues obtained with this method is shown in \cref{fig:migration} (right).


\begin{remark}\label{rem:cutoff}
    Any approximated eigenvalue outside the contour should not be trusted, since it might be affected by large errors. Specifically, any such approximate eigenvalue should be dropped. In practice, whenever $\Delta N\neq0$ as above, unbalanced eigenvalues never actually reach $\overline\lambda$. Rather, they disappear as soon as they cross the contour $\Gamma$.
\end{remark}

\subsection{From two to many}\label{sec:2:3}

Above, we have described how approximated eigenvalues can be obtained from two collocation points. If more collocation points are available, our construction can be easily generalized. To this aim, assume that a (not necessarily uniform) sampling grid $p_1<\ldots<p_S$ is available. Let $N_j$ be the number of eigenvalues identified at $p=p_j$ by the non-parametric eigenvalue solver of choice. Our ultimate target is the construction of a set of $N:=\max_{j=1,\ldots,S}N_j$ curves $\{\tilde\lambda^i\}_{i=1}^N$, each globally defined over $\mathcal{P}$, interpolating the available data.

To this aim, we first carry out the matching step from \cref{sec:2:1} over each interval of the form $[p_j,p_{j+1}]$, for $j=1,\ldots,S-1$, allowing us to reorder the local information in a coherent way. In case of eigenvalue migrations, we adopt the strategy proposed in \cref{sec:2:2}, placing fictitious eigenvalues at $\infty$. Then, the global approximate eigenvalue curves are obtained by interpolating each eigenvalue over the whole $p$-range, following the precomputed (locally optimal) matches. Note that approximate eigenvalues may migrate away to $\infty$ at some intermediate value of $p$, only to later return inside the contour.

The interpolation with respect to $p$ can be carried out with arbitrary strategies over the grid of collocation points, e.g., using piecewise-linear hat functions, splines, or radial basis functions, just to mention a few possibilities. In particular, note that interpolation schemes with a larger \emph{stencil} (e.g., high-order splines or radial basis functions with large support) have the potential of achieving higher accuracy in regions where the eigenvalue curves are smooth. This is notably the case if migrations and bifurcations are absent.

Regardless of the interpolation strategy of choice, one should be careful when dealing with fictitious eigenvalues at $\infty$. One option of handling this was described in \cref{sec:2:2}, and can be applied, e.g., in the piecewise-linear case, replacing straight-line segments with hyperbolic segments.

However, whenever more than 2 collocation points are present, an alternative option can be considered for dealing with migrating eigenvalues: one can build an approximate eigenvalue curve relying on data \emph{before} the migration, and then \emph{extrapolate} it to predict the eigenvalue location as the migration happens. In practice, this simply corresponds to ignoring any fictitious ``$\infty$'' eigenvalues, thus building the curves $\tilde\lambda^i$ by interpolating \emph{finite} eigenvalue data only. Such curves are then extrapolated even at values of $p$ for which migrations happen, leading to global approximations. Of course, in this step, any approximate eigenvalue that is predicted to be outside $\Omega$ should be dropped, as described in \cref{rem:cutoff}. Overall, this extrapolation-based strategy has the potential of being much more accurate than the crude ``harmonic mean'' approach described in \cref{sec:2:2}. As such, we choose it as our \emph{de facto} standard in our numerical implementation.

\begin{remark}\label{rem:migrations}
    Practically speaking, any eigenvalue at $\infty$ represents a ``barrier'' in the interpolation step, which generally degrades the approximation quality near the migration point (regardless of which approach for predicting migrating eigenvalues is chosen). For this reason, interpolation schemes with small stencils (e.g., piecewise-linear interpolation or splines of low order) may be preferred if many migrations are present, usually at the price of a decreased accuracy \emph{also away from migrations}.
    
    This being said, in our numerical tests in \cref{sec:5}, the extrapolation-based approximation strategy described above turns out to be extremely effective at preserving the approximation quality, despite any migration-induced discontinuities in the eigenvalue curves. This empirical evidence seems to suggest that one should \emph{not} avoid higher-order interpolation schemes (over larger interpolation stencils) for fear of migrations, as long as enough data is available to guarantee a stable extrapolation.
\end{remark}

\begin{figure}[t]
\centering
\includegraphics{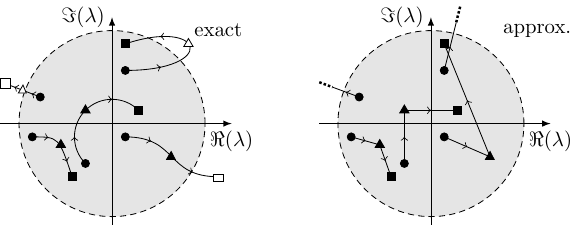}
\caption{Example of piecewise-linear global approximation from $3$ collocation points $p_1<p_2<p_3$. Eigenvalues at $p=p_j$, $j=1,2,3$, are represented as circles, triangles, and squares, respectively. The empty shapes are not captured by the non-parametric eigenvalue solver. Left plot: the curves denote the continuous evolution of the eigenvalues as $p$ varies from $p_1$ to $p_3$. Right plot: approximate eigenvalues $\tilde\lambda^i$. A single incorrect match is performed on the right-hand side of the plot due to simultaneous exit/entrance migration events.}
\label{fig:migration_triple}
\end{figure}

We show in \cref{fig:migration_triple} an example of the described method, relying on piecewise-linear interpolation. Several eigenvalue migrations can be observed, since $N_1=5>N_2=N_3=3$. Accordingly, in the approximation shown in \cref{fig:migration_triple} (right), we can see that $2=N_1-N_2$ eigenvalues leave the disk\footnote{Note that, in this example, it is impossible to predict migrations through extrapolation, since too little ``finite'' data is available for each of the two migrating eigenvalues. As such, we fall back to the ``harmonic mean'' strategy from \cref{sec:2:2}.} as $p$ increases from $p_1$ to $p_2$. 

In fact, two more migrations are present, as shown in \cref{fig:migration_triple} (left): as $p$ increases from $p_2$ to $p_3$, one eigenvalue in the fourth quadrant leaves the disk, while another eigenvalue re-enters from the first quadrant. These two events are not captured by the approximated curves, since they happen over the same local interval $[p_2,p_3]$, thus canceling each other out. Instead, the outgoing eigenvalue is matched to (and interpolated with) the incoming one, resulting in an erroneous eigenvalue curve.

This is a fundamental issue of our method for dealing with migrations, since an eigenvalue can be flagged as migrating (hence, linked to $\infty$) only if it cannot be matched to any other (finite) eigenvalue. This flaw is intrinsically due to the poor quality of the data. Specifically, in the case displayed in \cref{fig:migration_triple}, the samples are too few for the resulting approximation to have good quality. As such, our proposed strategy for dealing with this issue is the adaptive sampling described in \cref{sec:4}.

\section{Dealing with bifurcations}\label{sec:3}
A naturally arising phenomenon in parametric eigenvalue problems is that of \textit{bifurcations}, namely, values of $p$ where 2 or more eigenvalue curves coalesce and the eigenvalues become defective \cite{Lancaster,Rellich,Kato}. Note that bifurcations are distinct from \textit{(regular) eigenvalue crossings}, i.e., values of $p$ where 2 or more eigenvalue curves cross while the eigenvalues remain (semi-)simple.

For instance, the $(2\times 2)$-valued function
\begin{equation*}
    \vL(\lambda,p)=\begin{bmatrix}\lambda & \\ & \lambda-p\end{bmatrix}
\end{equation*}
has spectrum $\Lambda(p)=\{0,p\}$, and its two eigenvalues have a (regular) crossing at $p=0$. Notably, the eigenvalues \emph{can} be labeled in such a way that the curves \emph{remain smooth} through the crossing point: $\lambda^1(p)=0$ and $\lambda^2(p)=p$.

On the other hand, as a simple example of bifurcation, consider the $(2\times 2)$-valued function
\begin{equation}\label{eq:smallpnep}
    \vL(\lambda,p)=\begin{bmatrix}\lambda & p \\ 1 & \lambda\end{bmatrix},
\end{equation}
whose spectrum is $\Lambda(p)=\{\pm\sqrt{p}\}$. The two eigenvalues are real for $p>0$ and imaginary for $p<0$, with an order-2 bifurcation at $p=0$. Specifically, at $p=0$ the only eigenvalue $\lambda^1(0)=\lambda^2(0)=0$ is defective, and the eigenvalue curves $\lambda^{1,2}(p)=\pm\sqrt{p}$ are continuous but not continuously differentiable there, despite $\vL$ being analytic. In particular, there is no way to label the eigenvalues so that the eigenvalue curves are smooth at $p=0$.

Our method from \cref{sec:2} would greatly struggle with a problem like \eqref{eq:smallpnep}. Essentially, whenever defects are present, treating the eigenvalues as semi-simple, hence trying to approximate irregular curves with regular ones, can lead to gross approximation errors. Also, in the scope of adaptive sampling (as in \cref{sec:4}), this can significantly slow down convergence. Similar issues were already described in \cite[Section 4.2]{NP21}.

We show a specific example in \cref{fig:bif_qual} (left), where a sample order-3 bifurcation is displayed. Although it might seem like three \emph{smooth} eigenvalue curves are crossing at the bifurcation point, each eigenvalue curve has a \emph{non-smooth} $\sqrt[3]{p}$-like cusp. Our ``vanilla'' method yields the approximation showcased in \cref{fig:bif_qual} (center), which completely misses the bifurcation point. On the other hand, the alternative approach that we are about to describe gives much better qualitative results, see \cref{fig:bif_qual} (right). (We mention that, for fairness, we do not include a similar figure for the example \cref{eq:smallpnep}, since, in that case, our upcoming approach yields the \emph{exact} solution.)

\begin{figure}[t]
\centering
\includegraphics{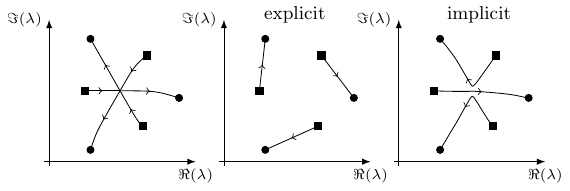}
\caption{Example of local behavior around an order-3 bifurcation (left plot). Arrows show the direction of eigenvalue changes as $p$ increases. Squares and dots denote eigenvalues at $p=p_1$ and $p=p_2$, respectively. Performing a match and linking the eigenvalues 1-to-1 (as described in \cref{sec:2:1}) leads to the middle figure. The all-at-once implicit approximate representation of the eigenvalues (described in \cref{sec:3:0}) gives the right figure.}
\label{fig:bif_qual}
\end{figure}

\subsection{Implicit representation of bifurcating eigenvalues}\label{sec:3:0}
We now describe our proposed approach for dealing with bifurcations. The main underlying idea is the removal of the assumption that all eigenvalues are semi-simple. Specifically, since the \emph{explicit} 1-by-1 approximation of the eigenvalue curves fails due to their lack of smoothness, we rely on an \emph{implicit but smooth} representation, using suitable polynomials as proxies.

Assume that a bifurcation happens at some $p\in(p_1,p_2)$, involving the $M$ eigenvalues $\{\lambda_1^i\}_{i=1}^M$ at $p=p_1$, and $\{\lambda_2^i\}_{i=1}^M$ at $p=p_2$. (Other semi-simple eigenvalues $\lambda^{M+1},\lambda^{M+2},\ldots$ can be dealt with as in \cref{sec:2}.)

As a first step, we define two degree-$M$ monic polynomials, whose roots are the above eigenvalues: $\xi_j(\lambda):=\prod_{i=1}^M(\lambda-\lambda_j^i)$ for $j=1,2$. These polynomials are the above-mentioned proxies that we use to manipulate the $M$ bifurcating eigenvalues all at once. Then, a bivariate polynomial $\xi$ is defined by interpolating $\xi_1$ and $\xi_2$ with respect to $p$: for instance, with linear interpolation,
\begin{equation}\label{eq:ppoly}
    \xi(\lambda,p):=\sum_{j=1}^2\frac{p-p_{3-j}}{p_j-p_{3-j}}\xi_j(\lambda).
\end{equation}
This allows us to have an \emph{implicit parametric} approximation of the eigenvalues: since we are using polynomials as proxies for their roots, we define $\tilde\lambda^1(p),\ldots,\tilde\lambda^M(p)\in\bC$ as the roots of $\xi(\cdot,p)$ (counted with multiplicity) in arbitrary order. In practical terms, this means that, if one seeks a prediction of the eigenvalues at some given $p$, one must first assemble the polynomial $\lambda\mapsto\xi(\lambda,p)$ according to \cref{eq:ppoly}, and then solve a degree-$M$ root-finding problem.

\begin{remark}\label{rem:rootfinding}
    An implicit definition of the eigenvalues is less convenient than the explicit one from \cref{sec:2}. Still, the extra root-finding step is cheap, since it involves only a degree-$M$ scalar polynomial.
\end{remark}

Since the roots of parametric polynomials can bifurcate (with order up to the degree of the polynomial), with the above approach we allow the approximate curves to have bifurcations, thus mimicking the structure of the exact eigenvalue curves. In this way, we can hope to significantly improve the approximation accuracy.

\begin{remark}\label{rem:fullimplicit}
    The approximate eigenvalue curves can be defined implicitly, regardless of whether bifurcations are present. For instance, given $S$ collocation points, consider the data $\{\lambda_j^i\}_{i=1,j=1}^{N,S}$, cf.~\cref{sec:2:1}, as well as a Lagrangian basis $\{\ell_j\}_{j=1}^S$, satisfying $\ell_j(p_{j'})=\delta_{jj'}$, the Kronecker delta. We can define the $N$ approximate curves $\tilde\lambda^1,\ldots,\tilde\lambda^N$ implicitly, as the roots (with respect to $\lambda$, for fixed $p$) of the bivariate polynomial
    \begin{equation*}
        \xi(\lambda,p):=\sum_{j=1}^S\ell_j(p)\prod_{i=1}^N(\lambda-\lambda_j^i).
    \end{equation*}
    Still, our match-based approach from \cref{sec:2} can be considered superior to this alternative option from several viewpoints:
    \begin{itemize}
        \item its approximations of the eigenvalues are explicit and faster to obtain, not requiring any additional root-finding;
        \item it is potentially more stable, avoiding the need to compute roots of (high-degree) polynomials, which, in fact, could even display large non-physical variations with respect to $p$;
        \item it can handle eigenvalue migrations, i.e., changes of degree of the univariate polynomials $\xi_j$; this is the main focus of this work, making this point particularly relevant.
    \end{itemize}
    As such, we choose to rely on an implicit representation of the approximate eigenvalue curves only locally near bifurcations.
\end{remark}

Lastly, we wish to mention that a hybrid explicit-implicit modeling of the eigenvalue curves gives rise to additional difficulties when more than two collocation points are involved. For instance, consider problem \cref{eq:smallpnep}, and assume that a uniform grid of 9 collocation points $p_1=-1$, $p_2=1$, $\ldots$, $p_9=15$, is given. A bifurcation is present between $p_1$ and $p_2$, and the two eigenvalue curves $\tilde\lambda^i$, $i=1,2$ are modeled implicitly for $p\in[p_1,p_2]$. However, $\tilde\lambda^1$ and $\tilde\lambda^2$ admit explicit formulas for $p\in[p_2,p_9]$, since they can be treated as semi-simple there. Then, should the two curves be continued implicitly or explicitly (in a piecewise fashion) for $p>p_2$?

The former option is potentially more accurate, but breaks down if some $\tilde\lambda^i$, $i=1,2$, is later involved in further bifurcations (involving other eigenvalues). Instead, the latter option is more numerically stable, at the price of accuracy, since implicit (for $p<p_2$) and explicit (for $p>p_2$) curves must be ``joined'' at $p_2$. In our implementation, see \cref{sec:5}, we choose an intermediate option: we extend the implicit definition of the curves for just a few more points, until, say, $p_5$. This is done in order to guarantee a good approximation quality near the bifurcation, employing a wider stencil of half-width\footnote{The desired stencil width can be considered an additional user input.} 4, as opposed to just 1. Then, the standard explicit representation is used outside this stencil, namely, for $p>p_5$.

\subsection{Flagging bifurcations}\label{sec:3:1}
Before we can deploy the hybrid explicit-implicit construction described above, one major issue remains: how can bifurcations (with the corresponding bifurcation orders) be identified? Specifically, in our framework, such bifurcation-flagging procedure can rely only on information at the collocation points. We proceed by describing one such strategy.

Assume that eigenpair data is available at two collocation points $p_1$ and $p_2$. We wish to determine whether any bifurcation happens at some $p\in[p_1,p_2]$ and, if so, which eigenpairs are involved. For this, we apply the following criterion, which, for now, we state only informally and without proof: we suspect that bifurcations are present if finding the optimal match (in the sense of \cref{sec:2:1}) is more difficult than usual, i.e., if several possible matches have similar loss function values. This can be made rigorous: given a user-defined tolerance $\delta>0$, we spot potential bifurcations by checking whether
\begin{equation}\label{eq:bif_tol}
    \textrm{loss}_{\textrm{second-best match}}<(1+\delta)\textrm{loss}_{\textrm{best match}}.
\end{equation}

If the above condition is true, i.e., if several matches perform similarly well (or badly), we decide not to ``trust'' the match and, rather than \emph{explicitly} interpolating the eigenvalue curves one by one, we proceed \emph{implicitly} as described in \cref{sec:3:0}. More specifically, we look at how the second-best match differs from the best one: any pair of indices $(\sigma_k,\tau_k)$ that is absent from the second-best match corresponds to one pair of eigenvalues (allegedly) undergoing bifurcation, namely, $\lambda_1^{\sigma_k}$ and $\lambda_2^{\tau_k}$. All such flagged eigenvalues would then be grouped together, and eventually dealt with by following the implicit treatment described in the previous section, cf.~\cref{fig:bif_qual} (right).

We have the following theoretical foundations of this idea.

\begin{figure}[t]
\centering
\includegraphics{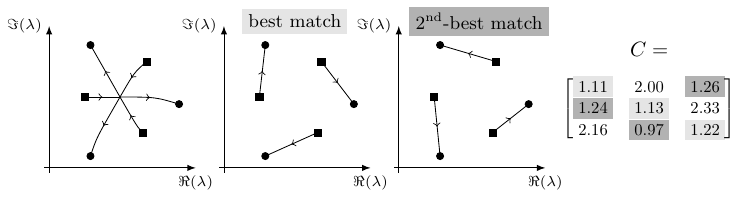}
\caption{(Continued from \cref{fig:bif_qual}.) Two different possible matches are shown in the middle and right plots. They correspond to the entries highlighted (in light gray and dark gray, respectively) in the cost matrix on the far right. Their losses are very similar, differing only by $\sim0.15\%$.}
\label{fig:bif_loc}
\end{figure}
    
\begin{proposition}
    Let $\delta>0$,  assume that a bifurcation is happening at $p=p^\star$, and set $p_1=p^\star-\varepsilon^-$ and $p_2=p^\star+\varepsilon^+$. For small enough $\varepsilon^-,\varepsilon^+>0$ (i.e., if the $p$-interval containing the bifurcation is small enough), the criterion \cref{eq:bif_tol} is true.
\end{proposition}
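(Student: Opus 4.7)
The plan is to analyze the bifurcation via the Puiseux expansion of the coalescing eigenvalue branches and show that, at leading order in $\varepsilon^\pm$, at least two distinct matchings attain the same cost. Consequently, the gap between the best and second-best losses is a higher-order effect that vanishes strictly faster than the losses themselves.

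First, I would invoke the standard theory for analytic matrix-valued functions \cite{Kato} to write the $M$ coalescing eigenvalue branches as
\[
\lambda^{[k]}(p) = \lambda^\star + c\,\omega_k\,(p-p^\star)^{1/M} + O\!\big((p-p^\star)^{2/M}\big),\quad k = 0,\ldots,M-1,
\]
with $\omega_k = e^{2\pi\mathrm{i}k/M}$ and $c\neq 0$. Evaluating at $p_1$ and $p_2$ produces two near-regular $M$-gons centered at $\lambda^\star$ of radii $|c|(\varepsilon^\pm)^{1/M}$; since $(-\varepsilon^-)^{1/M} = (\varepsilon^-)^{1/M}e^{\mathrm{i}\pi/M}$ in the principal branch, the two $M$-gons are rotated with respect to each other by an angle $\pi/M$.

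Second, I would compute the leading-order pairwise distance via the law of cosines,
\[
|\lambda_1^k - \lambda_2^l|^2 = |c|^2\!\left[(\varepsilon^-)^{2/M} + (\varepsilon^+)^{2/M} - 2(\varepsilon^-)^{1/M}(\varepsilon^+)^{1/M}\cos\!\left(\tfrac{\pi(1 - 2(l-k))}{M}\right)\right] + O(\varepsilon^{3/M}),
\]
and observe that $\cos(\pi/M) = \cos(-\pi/M)$. This makes the cases $l=k$ and $l=k-1$ produce the same leading-order distance, which is strictly smaller than for any other residue $l-k \pmod M$. Consequently, restricted to the bifurcating block, both the identity matching $\sigma=\mathrm{id}$ and the cyclic shift $\sigma(k)=k-1$ achieve the same minimal leading-order loss of order $\varepsilon^{1/M}$, while every other permutation of the bifurcating indices incurs a strictly larger leading-order cost.

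Third, I would handle the $N_1-M$ and $N_2-M$ non-bifurcating eigenvalues. These vary analytically on $[p_1,p_2]$ and stay well separated from the cluster at $\lambda^\star$, so (generically) their optimal sub-matching is unique and contributes only $O(\max(\varepsilon^-,\varepsilon^+))$ to the loss---negligible compared with the $\Theta(\varepsilon^{1/M})$ cost of the bifurcating block for $M\geq 2$. The second-best total matching therefore differs from the best only in the bifurcating block, giving
\[
\textrm{loss}_{\textrm{second-best}} - \textrm{loss}_{\textrm{best}} = O(\varepsilon^{2/M}),\qquad \textrm{loss}_{\textrm{best}} = \Theta(\varepsilon^{1/M}),
\]
so $\textrm{loss}_{\textrm{second-best}}/\textrm{loss}_{\textrm{best}} = 1 + O(\varepsilon^{1/M}) < 1 + \delta$ as soon as $\max(\varepsilon^-,\varepsilon^+)$ is sufficiently small.

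The main obstacle I foresee is obtaining uniform control of the Puiseux remainders and coping with non-generic configurations---for instance, two non-bifurcating branches crossing inside $[p_1,p_2]$, or accidental higher-order symmetries that create more than two leading-order-optimal matchings (which would actually \emph{help} the criterion, but complicate the enumeration). These should be handled by standard perturbation estimates, but a clean statement likely requires either a mild genericity assumption or a more careful combinatorial analysis of near-optimal permutations.
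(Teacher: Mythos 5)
Your proposal follows essentially the same route as the paper's sketch: the ``star'' structure with uniformly spaced, interleaved angles that the paper invokes via Lancaster's Theorem 3 is exactly the regular $M$-gon picture you obtain from the Puiseux expansion, and the paper's ``clockwise'' and ``counterclockwise'' matches correspond to your identity and cyclic-shift permutations. You do add rigor in one place where the paper's sketch is loose: the paper concludes only that $\textrm{loss}_{\textrm{second-best}}-\textrm{loss}_{\textrm{best}}\to0$, but criterion \cref{eq:bif_tol} is a statement about the \emph{ratio}, and since both losses tend to zero, the difference vanishing does not by itself suffice. Your observation that the difference is $O(\varepsilon^{2/M})$ while the optimal loss is $\Theta(\varepsilon^{1/M})$, so that the ratio is $1+O(\varepsilon^{1/M})$, closes that gap cleanly. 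You also make explicit the step that the non-bifurcating block only contributes $O(\varepsilon)$, which the paper leaves implicit.

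One small error: with your phase convention $(-\varepsilon^-)^{1/M}=(\varepsilon^-)^{1/M}e^{\mathrm{i}\pi/M}$ the two nearest-neighbor residues are $l-k=0$ and $l-k=1$ (giving $\cos(\pi/M)$ and $\cos(-\pi/M)$), not $l-k=0$ and $l-k=-1$; the latter gives $\cos(3\pi/M)$, which is strictly worse for $M\geq2$. So the second optimal match is $\sigma(k)=k+1$, not $\sigma(k)=k-1$. This is only a relabeling and does not affect the structure of the argument. It would also be worth stating the combinatorial fact you rely on — that a bijection of $\mathbb{Z}/M\mathbb{Z}$ with $\sigma(k)\in\{k,k+1\}$ for all $k$ must be the identity or the shift — since that is precisely what guarantees that exactly two permutations attain the leading-order optimum. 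Finally, as you note, the single-cycle Puiseux structure (and well-separatedness of the other eigenvalues) is a genericity assumption; the paper's sketch makes the same implicit assumption.
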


\begin{proof}[Sketch of proof]
    Under regularity assumptions on $\vL$, the eigenvalue curves approach the bifurcation point forming a ``star'' with uniformly spaced angles, see, e.g., \cite[Theorem 3]{Lancaster}. Specifically, the angles at which such curves enter the bifurcation point are interleaved with the angles at which they exit from it. See \cref{fig:bif_loc} (left) for a graphical representation.

    In the limit $\abs{p_2-p_1}\to 0$, this means that each eigenvalue $\lambda_1^i$ is equally far from its two closest eigenvalues $\lambda_2^j$, and vice versa. Now, consider two different matches: one performed clockwise, as in \cref{fig:bif_loc} (middle), and one performed counterclockwise, as in \cref{fig:bif_loc} (right). As $\abs{p_2-p_1}\to 0$, these matches have equal and optimal loss, so that $\textrm{loss}_{\textrm{second-best match}}-\textrm{loss}_{\textrm{best match}}\to 0$.
\end{proof}

We stress once more that the two (asymptotically) optimal matches in the proof above are equally ``bad'' from an approximation viewpoint, in the sense that they miss the bifurcation. We only use them to spot potential bifurcations, leaving the approximation task to the implicit method from \cref{sec:3:0}.

Unfortunately, most solvers for the optimal-matching problem compute only the optimal solution, and are incapable of computing suboptimal ones, as required by \cref{eq:bif_tol}. To circumvent this issue, we propose \cref{algo:bifurcation}. Essentially, we first find the optimal solution of the problem, in terms of the index tuples ${\bm\sigma}$ and ${\bm\tau}$, cf.~\cref{sec:2:1}. Then, for any $i$, we seek a solution of the problem that \emph{prohibits} matching the indices $\sigma_i$ and $\tau_i$. To this aim, we replace the entry $c_{\sigma_i\tau_i}$ of the cost matrix with $\infty$, and then execute the optimal-matching routine on this modified matrix. Then we compare the loss of the resulting suboptimal match with the optimal one, using \cref{eq:bif_tol}. If the criterion is satisfied, we conclude that a bifurcation may be present. Note that, as $i$ varies, the algorithm will necessarily explore the above-mentioned second-best match.

\begin{algorithm}[htb]
\caption{Flag eigenvalues that are involved in bifurcations}
\label{algo:bifurcation}
\begin{algorithmic}[1]
    \REQUIRE{cost matrix $C\in\bR^{N\times N}$ with entries as in \cref{eq:cost_entry}, tolerance $\delta>0$}
    \STATE{find optimal match: $({\bm\sigma},{\bm\tau})\gets\texttt{OptimalMatch}(C)$, cf.~\cref{sec:2:1}}
    \STATE{find optimal loss: $\texttt{loss}\gets\sum_{k=1}^Nc_{\sigma_k\tau_k}$}
    \FOR{$i=1,\ldots,N$}
        \STATE{define $C^\#$ as a copy of $C$, where entry $c_{\sigma_i\tau_i}$ has been replaced by $\infty$}
        \STATE{find match: $({\bm\sigma}^\#,{\bm\tau}^\#)\gets\texttt{OptimalMatch}(C^\#)$}
        \STATE{find optimal loss: $\texttt{loss}^\#\gets\sum_{k=1}^Nc_{\sigma_k^\#\tau_k^\#}$}
        \IF{$\texttt{loss}^\#<(1+\delta)\cdot\texttt{loss}$}
            \STATE{find differences between $({\bm\sigma},{\bm\tau})$ and $({\bm\sigma}^\#,{\bm\tau}^\#)$: any pair of indices that appears in $({\bm\sigma},{\bm\tau})$ but not in $({\bm\sigma}^\#,{\bm\tau}^\#)$ is involved in bifurcations}
        \ENDIF
    \ENDFOR
\end{algorithmic}
\end{algorithm}

For generality, it is important to make the following observation.

\begin{remark}\label{rem:unbalancedbifurcation}
    In the above discussion on bifurcations, we assumed that no eigenvalues migrate out through the contour, i.e., $\Delta N=0$. Our proposed strategy easily generalizes to the case $\Delta N\neq 0$. However, an intrinsic limitation is present: the index $i$ in \cref{algo:bifurcation} only loops over matched eigenvalues, disregarding any migrating ones. As such, no migrating eigenvalues can be flagged as part of a bifurcation.
\end{remark}

\section{Adaptive sampling}\label{sec:4}
So far, we have been working under the assumption that the grid of collocation points $\{p_j\}_{j=1}^S$ is given in advance. Still, the collocation points have a great effect on the approximation quality. For instance, in case of eigenvalue migration, it greatly helps to have a collocation point where the eigenvalue is just about to cross the contour. Moreover, the smoothness of the eigenvalue curves determines how many samples are needed to well approximate them. These observations behoove us to be extremely careful when choosing the collocation points.

However, all the above considerations involve features of the eigenvalues that are unknown \emph{a priori}. As such, choosing the collocation points in advance can lead to disappointing results. To solve this issue, we propose a fully non-intrusive strategy for \emph{a posteriori} adaptivity, allowing a fully automated selection of the collocation points' number and locations.

\begin{algorithm}[htb]
\caption{Adaptive sampling based on a predictor-corrector scheme}
\label{algo:adaptive}
\begin{algorithmic}[1]
    \REQUIRE{initial collocation points $p_j$ (e.g., a uniform grid over $\mathcal{P}$), tolerance $\varepsilon>0$}
    \STATE{$\texttt{converged}\gets\texttt{False}$}
    \WHILE{not \texttt{converged}}
        \STATE{build approximations $\tilde\lambda^i$ over the collocation points, as in \cref{sec:2,sec:3}}\label{item:item}
        \STATE{define test points as refinement of current collocation points: $\hat p_j:=\frac{p_j+p_{j+1}}2$}\label{item:refine}
        \STATE{$\texttt{converged}\gets\texttt{True}$}
        \FOR{all test points $\hat p_j$}
            \STATE{find $\{\lambda^i(\hat p_j)\}_i$ by solving non-parametric eigenvalue problem}
            \STATE{compare predicted eigenvalues $\{\tilde\lambda^i(\hat p_j)\}_i$ with exact eigenvalues $\{\lambda^i(\hat p_j)\}_i$}
            \IF{approximation accuracy is worse than $\varepsilon$}\label{item:test}
                \STATE{add $\hat p_j$ to the collocation points}
                \STATE{$\texttt{converged}\gets\texttt{False}$}
            \ENDIF
        \ENDFOR
    \ENDWHILE
\end{algorithmic}
\end{algorithm}

In \cref{algo:adaptive}, we summarize our approach, which is based on a \emph{predictor-corrector} framework that was already considered, e.g., in \cite{NP21}. In our pseudo-code, the only step that requires further discussion is line~\ref{item:test}: what quantity is compared to $\varepsilon$ in order to determine whether the test point $\hat p_j$ should become a collocation point? Our answer relies on the matching strategy introduced in \cref{sec:2:1}, as we proceed to discuss.

Let $\{\tilde\lambda^i(\hat p_j)\}_{i=1}^{\tilde N}$ be the approximate eigenvalues at $p=\hat p_j$. 
In addition, let $\{\lambda^i(\hat p_j)\}_{i=1}^N$ be the exact eigenvalues at $p=\hat p_j$, computed by the non-parametric eigenvalue solver. In order to assess the approximation quality, we perform an optimization step as in \cref{sec:2:2}, computing the minimal-loss match of $\{\tilde\lambda^i(\hat p_j)\}_{i=1}^{\tilde N}$ with $\{\lambda^i(\hat p_j)\}_{i=1}^N$. We deem the current approximation accurate if all the costs involved in the optimal match (i.e., the distances between pairs of matched eigenvalues) are below the tolerance $\varepsilon$. Explicitly: given the optimal match indices $\bm\sigma$ and $\bm\tau$, cf.~\cref{sec:2:2}, we flag $\hat p_j$ for subsequent refinement if
\begin{equation*}
    \max_{i=1,\ldots,\min\{\tilde N, N\}}\abs{\tilde\lambda^{\sigma_i}(\hat p_j)-\lambda^{\tau_i}(\hat p_j)}>\varepsilon.
\end{equation*}
With this strategy, the algorithm is expected to take new samples where (i) wrong matches are performed, (ii) the eigenvalue curves are badly approximated, (iii) errors happen in bifurcation flagging, or (iv) poorly identified eigenvalue migrations happen.

Concerning this last point, we note that it is possible for the number of predicted and reference eigenvalues to be different: with the above notation, $\tilde N\neq N$. This case corresponds to eigenvalues that are predicted to cross the contour too ``early'' or too ``late'' (in terms of $p$). The strategy outlined above is very lenient about this effect: whenever the number of eigenvalues is predicted wrong, we disregard any extra/missing eigenvalues in the error computation. In our numerical experiments, we have observed that, in some cases, this choice can lead to too few refinements around values of $p$ at which migrations happen. This potentially unwanted behavior can be counteracted by deciding to treat the case with $\tilde N\neq N$ as an automatic fail of the adaptivity test. Of course, this idea runs the opposite risk of refining too aggressively around parameters where migrations happen.

\begin{remark}\label{rem:fooled}
    The proposed strategy for adaptivity is fully data-driven. Accordingly, it may be fooled whenever the data does not provide enough information. For instance, \cref{algo:adaptive} may reach convergence without identifying eigenvalue crossings or bifurcations, as long as such effects are sufficiently localized. See, e.g., \cref{fig:fooled}.

    \begin{figure}[ht]
    \centering
    \includegraphics{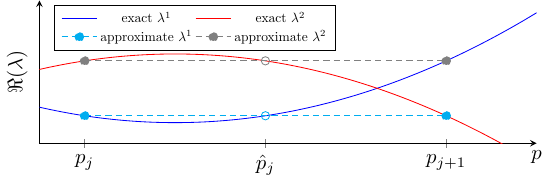}
    \caption{The testing error at $\hat p_j=\frac{p_j+p_{j+1}}2$ is zero. This prevents refinements near the mislabeled eigenvalue crossing.}
    \label{fig:fooled}
    \end{figure}

    Although this is a common limitation of data-driven adaptivity \cite{NP21}, partial countermeasures are possible. For instance, to mention a few, one could (i) incorporate eigenvector information in the match step, as in \cref{rem:vectors}, (ii) start with a finer initial grid of collocation points, or (iii) use a finer test set, e.g., by adding also $\frac{3p_j+p_{j+1}}4$ and $\frac{p_j+3p_{j+1}}4$ as test points in line \ref{item:refine} of \cref{algo:adaptive}. Despite improving our method's robustness, none of the above ideas guarantees success: counterexamples in the style of \cref{fig:fooled} may be designed to fool any data-driven strategy.
\end{remark}

\section{Numerical examples}\label{sec:5}
We now present some numerical evidence of the effectiveness of our proposed approach. We make our basic Python implementation of the method freely available at \url{https://github.com/pradovera/pEVP_match}, where we also include some code to reproduce our results below.

\subsection{Roots of cubic bivariate polynomial}\label{sec:5:1}
As a first artificial example, we consider the problem of identifying the roots (with respect to $\lambda$) of the polynomial
\begin{equation*}
    \pi(\lambda,p)=\lambda^3+(p-2)\lambda+(2p-1),
\end{equation*}
with parameter $p\in\mathcal{P}=[-50,50]$. To this aim, we cast the problem as a parametric linear eigenvalue problem involving a companion matrix:
\begin{equation}\label{eq:5:1:evp}
    \text{find }\lambda(p)\in\bC\text{ s.t. }\exists\vx\in\bC^3\setminus\{\vzero\}\ :\ \underbrace{\left(\begin{bmatrix}
        0&0&1-2p\\
        1&0&2-p\\
        0&1&0
    \end{bmatrix}-\lambda\vI_3\right)}_{\vL(\lambda,p)}\vx=\vzero,
\end{equation}
where $\vI_3$ is the identity matrix of size 3.

A cumbersome explicit formula is available\footnote{Given $\alpha(p)=\sqrt[3]{\frac{\sqrt{4 p^3 + 84 p^2 - 60 p - 5}}{6\sqrt{3}} - p + \frac12}$ and $\beta(p)=\frac{p - 2}{3\alpha(p)}$, the solutions of \cref{eq:5:1:evp} are \begin{equation*}
    \lambda^{1,2,3}(p) = \left\{\alpha(p)-\beta(p),-\frac{1-i\sqrt{3}}{2}\alpha(p)+\frac{1 + i\sqrt{3}}{2}\beta(p),-\frac{1 + i\sqrt{3}}{2}\alpha(p)+\frac{1 - i\sqrt{3}}{2}\beta(p)\right\}.
\end{equation*}} for the three parametric roots. Still, given the scope of the paper, we try to solve the problem with our proposed method, effectively ignoring the precise structure of the matrix-valued function $\vL$ in \cref{eq:5:1:evp}. Specifically, we choose to employ Beyn's method as the non-parametric eigenvalue solver in our algorithm, which only assumes that it is possible to solve linear systems involving $\vL(\lambda,p)$ at arbitrary values of $\lambda$ and $p$.

\begin{figure}[t]
\centering
\includegraphics{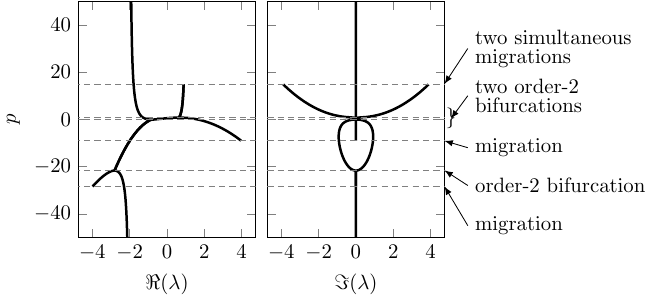}
\caption{Real (left) and imaginary (right) parts of the solutions of \cref{eq:5:1:evp}. The curves are plotted only over the region of interest, i.e., $\abs{\lambda}\leq4$.}
\label{fig:1:qual}
\end{figure}

As contour for Beyn's method, we choose a radius-$4$ circle centered at $0$. The behavior of the eigenvalues in this region is displayed in \cref{fig:1:qual}. On the right-hand side of the figure, we also indicate the major events that happen as the parameter $p$ varies within its range: four eigenvalue migrations (at $p\simeq-28.5$, $p\simeq-9.17$, and two at $p\simeq14.8$) and 3 order-2 bifurcations (at $p\simeq-21.7$, $p\simeq-0.075$, and $p\simeq0.76$). Note that the latter two bifurcations are quite close to each other, resembling, in fact, a single order-3 bifurcation. Of course, such features of the eigenvalues are unknown \emph{a priori}.

We apply our proposed method to approximate the eigenvalue curves $\lambda^i(p)$. More specifically, we employ the adaptive strategy described in \cref{sec:4} with tolerance $\varepsilon=10^{-2}$, starting with an initial grid containing only the endpoints $p_1=-50$ and $p_2=50$. We approximate migrations using the extrapolation-based strategy from \cref{sec:2:3}. Bifurcations are handled as described in \cref{sec:3}, with $\delta=0.1$. Piecewise-linear interpolation is used to build the approximate eigenvalue curves. To discretize the contour integrals required by Beyn's method, we use the trapezoidal rule \cref{eq:beyn_quad} with $N_z=25$ quadrature points. Also in Beyn's method, we set $K=1$ and $m=5$.

\begin{figure}[t]
\centering
\includegraphics{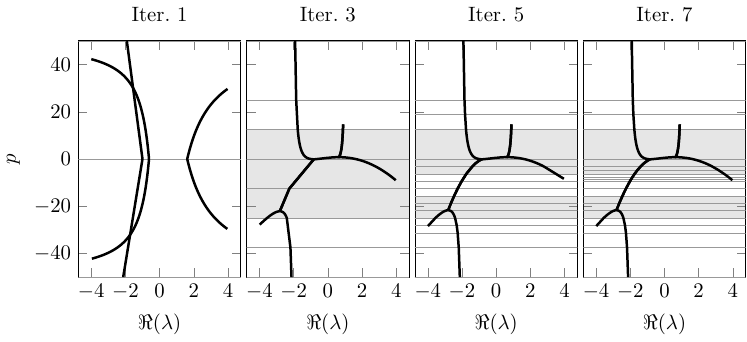}
\caption{Approximate eigenvalue curves (real part) at different stages of the adaptive algorithm (iteration indices are shown on top). Horizontal lines are placed at the collocation points' locations. Grayed-out areas denote regions where bifurcations are detected.}
\label{fig:1:iter}
\end{figure}

In \cref{fig:1:iter} we show the approximate eigenvalue curves at some iterations of the adaptive procedure, which terminates at the seventh iteration. Initially, the approximation quality is poor over most of the parameter range, resulting in uniform refinements of the sampling grid. At later iterations, the algorithm is able to automatically identify locations where the current approximation quality is good (enough), e.g., near the endpoints of the parameter range. Accordingly, no collocation points are added there. On the other hand, further refinements are carried out in critical regions, where the behavior of the eigenvalue curves is not yet understood well enough.

In the same plot, we also see how the ranges of $p$ where bifurcations occur seem to be identified well. Sometimes the grayed-out bands extend one interval up or down from the actual bifurcation. This means that an implicit representation of the eigenvalue curves (see \cref{sec:3:0}) is used despite no actual bifurcation happening. This is not an issue: just because the eigenvalue curves are represented implicitly, it does not mean that the algorithm is placing an (approximate) bifurcation there.

We note that an order-3 bifurcation (thus, involving all three eigenvalue curves) is placed around $p=0$. This yields an approximation of great quality. In fact, given the structure of the problem, the interpolatory order-3 implicit representation of the eigenvalues is exact. Note, however, that our data-driven algorithm is completely unaware of this.

On the other hand, refinements are performed in the region just below $p=0$. As is apparent by looking at the rightmost plot in \cref{fig:1:iter}, this process is driven by the eigenvalue migration at $p\simeq-9.17$ (where the collocation points are most dense), and not by the bifurcation.

\begin{figure}[t]
\centering
\includegraphics{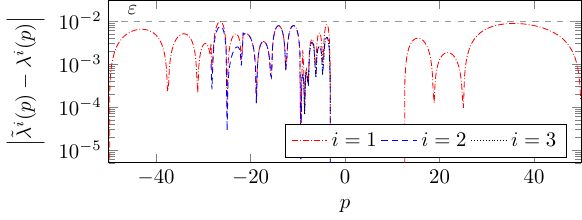}
\caption{Approximation error in the eigenvalue curves. The horizontal line denotes the tolerance $\varepsilon$.}
\label{fig:1:error}
\end{figure}

The approximation error shown in \cref{fig:1:error}, comparing the approximate eigenvalue curves with the exact ones, quantitatively confirms our conclusions above. In particular, the order-3 implicit representation of the eigenvalue curves near $p=0$ yields errors of the order of round-off. More generally, we see that the error is uniformly below the tolerance $\varepsilon$. Note that, since our adaptive sampling strategy is data-driven, this might not be guaranteed in general: sub-grid effects (e.g., unidentified crossings, cf.~\cref{rem:fooled}) may increase the error above the tolerance in between collocation points.

\subsection{Waveguide modes under geometric variations}\label{sec:5:3}

We consider a time-harmonic wave propagation problem in a waveguide whose shape is subject to parametric variations. The study of the parametric waveguide eigenvalues and eigenmodes is extremely useful in modeling. For instance, one sometimes aims at optimizing the waveguide's shape to achieve some target filtering capabilities, e.g., for its use as a filter for electromagnetic waves or as a sound muffler.

Specifically, given a parameter $p\in\mathcal{P}=[0.2,0.8]$, we consider the simple 2D domain $\Omega=\Omega(p)\subset\bR^2$, defined as
\begin{align*}
    \Omega(p)=&\left(]0,3[\ \times\ ]0,1[\right)\setminus B_p(1,0)\\
    =&\left\{(x,y):0<x<3,0<y<1,(x-1)^2+y^2>p\right\},
\end{align*}
see \cref{fig:3:modes}, whose boundary is split into $\Gamma_{\textrm{out}}:=\{3\}\times]0,1[$ and $\Gamma_0=\Gamma_0(p):=\partial\Gamma(p)\setminus\Gamma_{\textrm{out}}$. We seek solutions $(\lambda,u)$, $u\not\equiv 0$, of the quadratic eigenvalue problem
\begin{equation}\label{eq:scatter}
\begin{cases}
    -\Delta u(x,y)=\lambda^2u(x,y)\quad&\text{for }(x,y)\in\Omega(p),\\
    \partial_\nu u(x,y)=0\quad&\text{for }(x,y)\in\Gamma_0(p),\\
    \partial_\nu u(x,y)=\mathrm{i}\lambda u(x,y)\quad&\text{for }(x,y)\in\Gamma_{\textrm{out}}.
\end{cases}
\end{equation}
Above, $\Delta$ denotes the 2D Laplace operator, $\nu$ is the outer normal vector to $\partial\Omega(p)$, and $\mathrm{i}$ is the imaginary unit. In computational practice, a discretization of the eigenvalue problem is considered instead of the continuous model in  \cref{eq:scatter}. Here, we choose to employ P2-finite elements on a triangular mesh.

\begin{figure}[t]
\centering
\includegraphics{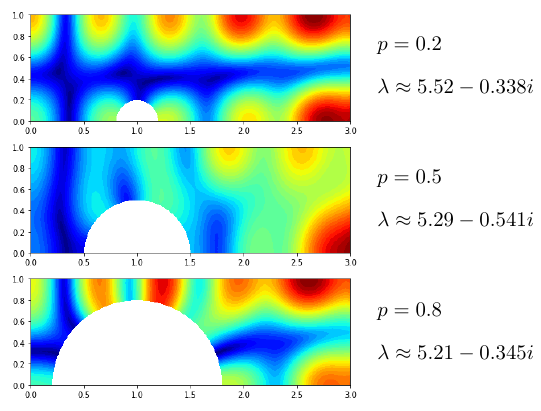}
\caption{From top to bottom, sample eigenmodes corresponding to different parameter values (reported on the right-hand side). The three eigenvalues have been selected from the same eigenvalue curve.}
\label{fig:3:modes}
\end{figure}

Note that the last equation, a $\lambda$-dependent Robin-type boundary condition on $\Gamma_{\textrm{out}}$, is the widely used first-order approximation of the Sommerfeld ``outgoing wave'' condition. It makes the eigenvalue problem quadratic. Moreover, any eigenvalue other than the trivial $\lambda=0$ must have negative imaginary part \cite{spectrum}.

Variations in the parameter $p$ change the domain of definition of the eigenvector $u$, as well as the domain where \cref{eq:scatter} is to be solved. Specifically, each value of $p$ entails an independent assembly of the finite-element matrices involved in the discretization of \cref{eq:scatter}, on top of meshing the varying domain $\Omega(p)$. In particular, the size of the discrete problem \cref{eq:pevp} may change with $p$. (In our implementation, as $p$ varies in $\mathcal{P}$, finite-element spaces have sizes varying between $6\cdot 10^3$ and $10^4$.)

This prevents most intrusive methods from being applied to the present parametric eigenvalue problem, at least in its current form. More in detail, to make the problem tractable via intrusive approaches, one first must cast the PDE model \cref{eq:scatter} over a parameter-\emph{independent} domain $\Omega^\star$ by means of a parameter-\emph{dependent} mapping $\Phi(p):\Omega^\star\to\Omega(p)$ \cite{shape2,shape1}. However, the construction of such mapping is often extremely laborious and very intrusive, requiring considerable user effort to recast the problem in a usable but, arguably, less natural form. Instead, the non-intrusiveness of our approach allows its use even though the eigenvalue samples at the collocation points are obtained using different meshes\footnote{Admittedly, the approximation of the eigen\emph{vector} curves remains an issue even in our non-intrusive framework, since one has to deal with eigenvectors of incompatible sizes. However, in our present experiment, only the eigen\emph{value} curves are of interest.}.

To approximate the eigenvalues of the above problem, we apply the same framework as in the previous experiment, with two main differences: (i) we choose a radius-$2$ disk centered at $5$ as contour in Beyn's method, and (ii) we use cubic splines for $p$-interpolation. Moreover, to account for the larger expected number of eigenvalues, we set a higher number of quadrature points $N_z=200$, as well as $K=5$ and $m=10$, in Beyn's method. Our adaptive algorithm terminates after selecting $24$ parameter values as collocation points.

\begin{figure}[t]
\centering
\includegraphics{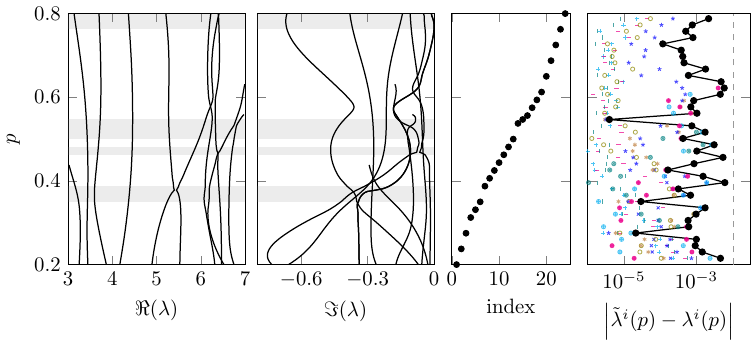}
\caption{From left to right: (1) real and (2) imaginary parts of the approximate eigenvalue curves (grayed-out areas denote regions where bifurcations are detected); (3) adaptively selected collocation points, sorted in increasing order, displayed by index; (4) approximation error in the eigenvalue curves over a uniform grid of 50 parameter values, with different symbols denoting different indices $i$ (the black line is the envelope of the maximum errors).}
\label{fig:3:app}
\end{figure}

The resulting approximate eigenvalue curves are shown in \cref{fig:3:app} (left). See also \cref{fig:qual} for a depiction of the approximate eigenvalue trajectories in the complex plane. Some eigenvalue migrations are present, since the number of eigenvalues identified by Beyn's method varies between $N=7$ and $N=10$. Some bifurcations also appear to be present. The algorithm seems to have no issues in their identification and approximation. We can also observe that some of the eigenvalue curves get very close to the real axis for some values of $p$. This is of practical interest, being a symptom of almost-resonating behavior of the waveguide.

The approximation error is shown in \cref{fig:3:app} (right). Note that, in this example, we do not have an explicit formula for the exact eigenvalues. As such, we compute the error by looking at the distance between the approximate eigenvalues and the eigenvalues obtained by expensively running Beyn's method at each of the chosen test values of $p$. We can observe that the tolerance is attained at all test points.

\subsection{Delayed control of a heat equation}\label{sec:5:2}

We consider a simple heat equation subject to delayed feedback, inspired by \cite{Caliskan09}: we seek a space-time solution $u=u(x,t)$ of
\begin{equation}\label{eq:heat}
\begin{cases}
    \partial_tu(x,t)=\kappa\partial_{xx}u(x,t)-f(u,x,t)\quad&\text{for }(x,t)\in(0,\pi)\times(0,\infty),\\
    u(0,t)=u(\pi,t)=0\quad&\text{for }t\in(0,\infty),\\
    u(x,0)=u_0(x)\quad&\text{for }x\in(0,\pi).\\
\end{cases}
\end{equation}
Above, $\kappa=0.02$, $u_0$ is some initial condition (unimportant for our discussion), and
\begin{equation*}
    f(u,x,t)=0.1u(x,t)+0.05u(x,t-1)+pu(x,t-2),\quad p\in\bR,
\end{equation*}
is a distributed feedback, with an instantaneous term and two delayed terms, with delays $\tau_1=1$ and $\tau_2=2$. Note the presence of the parameter $p$, corresponding to the strength of the second delayed feedback.

Interested in the stability properties of \cref{eq:heat}, we consider its frequency-domain formulation, obtained via the \emph{ansatz} $u(x,t)=v(x)e^{\lambda t}$, $\lambda\in\bC$. This leads to a nonlinear infinite-dimensional eigenvalue problem involving the Laplace operator $\partial_{xx}$, which, after spatial discretization (e.g., by finite differences), results in a problem like \cref{eq:pevp}. Specifically, if a uniform grid of $(M+1)$ points is used to discretize the interval $[0,\pi]$, centered finite differences lead to a size-$(M-1)$ problem that reads
\begin{equation*}
    \underbrace{\left(\kappa\left(\frac{M}\pi\right)^2\begin{bmatrix}
        2 & -1 & & & \\
        -1 & 2 & -1 & & \\
         & \ddots & \ddots & \ddots & \\
         & & -1 & 2 & -1\\
         & & & -1 & 2\\
    \end{bmatrix}+(\lambda+0.1+0.05e^{-\lambda}+pe^{-2\lambda})\vI_{M-1}\right)}_{\vL(\lambda,p)}\vx=0,
\end{equation*}
with $\vI_{M-1}$ being the size-($M-1$) identity matrix. In our tests, we set $M=5\cdot10^3$.

In order to approximate the eigenvalues of the above problem, we apply the same framework as in the previous experiment, with three main differences: (i) the parameter range is now $\mathcal{P}=[-0.1,0.1]$, (ii) since we expect stable eigenvalues (with negative real part), we choose a radius-$1$ disk centered at $-1$ as contour in Beyn's method, and (iii) still in Beyn's method, we set a higher number of quadrature points $N_z=10^3$, as well as $m=30$. Our adaptive algorithm terminates after selecting $60$ parameter values as collocation points.

\begin{figure}[t]
\centering
\includegraphics{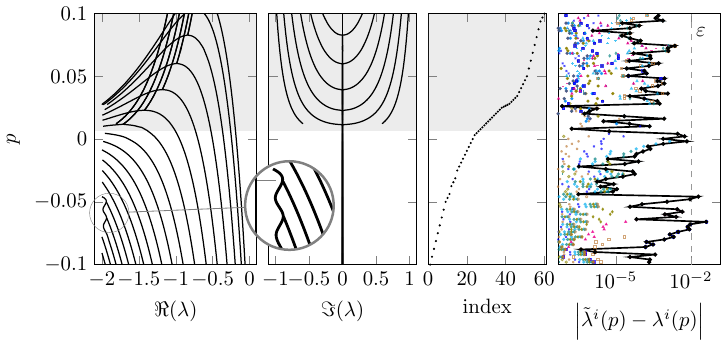}
\caption{From left to right: (1) real and (2) imaginary parts of the approximate eigenvalue curves; (3) adaptively selected collocation points, sorted in increasing order, displayed by index; (4) approximation error in the eigenvalue curves over a uniform grid of 100 parameter values, with different symbols denoting different indices $i$ (the black line is the envelope of the maximum errors).}
\label{fig:2:app}
\end{figure}

The resulting approximate eigenvalue curves are shown in \cref{fig:2:app}. We can observe many eigenvalue migrations at $\lambda=-2$, the leftmost point of the contour. Indeed, the number of eigenvalues identified by Beyn's method varies between $N=7$ (for $p\approx0.005$) and $N=18$ (for $p\approx-0.1$).

Some of the migrations around $p\approx-0.05$ were particularly difficult for the algorithm to deal with. Specifically, the migrating eigenvalue ($\approx-2$) was mistakenly matched to a not-yet-migrating eigenvalue ($>-2$) in two separate instances. As such, an oscillating trajectory of the eigenvalue curve is predicted, linking the migrating eigenvalue with the not-yet-migrating one. A magnified view of this is included in \cref{fig:2:app}.

Five bifurcations, with eigenvalues turning from real to complex conjugates, are also present. The algorithm seems to have no issues in their identification and approximation. Indeed, the collocation points are quite sparse for $p>0.028$, where three of the bifurcations happen. Notably, the collocation points in that region appear as sparse as in the region where no bifurcations happen, namely, $p<0$. This is evidence of the fact that, thanks to the strategy described in \cref{sec:3}, bifurcations do not degrade the approximation quality. This would not have been the case if all eigenvalues had been treated as semi-simple.

The approximation error is shown in \cref{fig:2:app} (right). As in the previous example, we do not have an explicit formula for the exact eigenvalues, and we use the values obtained by expensively running Beyn's method as reference. We can observe that the tolerance is not attained near the two badly identified eigenvalue migrations. Since they arise as ``sub-grid effects'', such higher-than-desired errors are invisible to our adaptive sampling strategy, cf.~\cref{rem:fooled}. As we show next, reducing the tolerance fixes this issue.

\begin{figure}[t]
\centering
\includegraphics{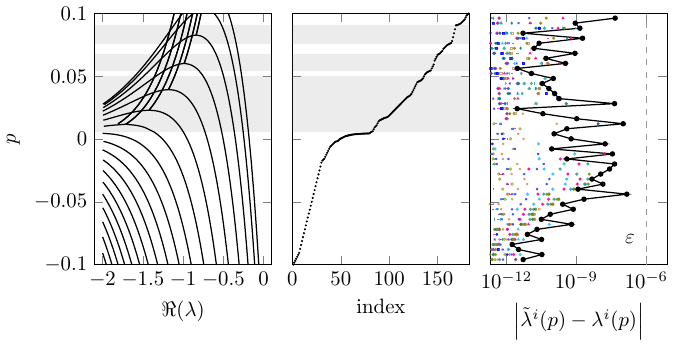}
\caption{Numerical results with reduced tolerance $\varepsilon=10^{-6}$. From left to right: (1) real part of the approximate eigenvalue curves; (2) adaptively selected collocation points, sorted in increasing order, displayed by index; (3) approximation error in the eigenvalue curves over a uniform grid of 100 parameter values, with different symbols denoting different indices $i$ (the black line is the envelope of the maximum errors).}
\label{fig:2:err_6}
\end{figure}

Now we repeat our simulation after reducing the tolerance to the much lower value $\varepsilon=10^{-6}$. To help the algorithm reach this lower tolerance, we also enlarge the stencil of the interpolation strategy, by using order-7 B-splines. The algorithm selects $182$ collocation points, displayed in \cref{fig:2:err_6} (middle). We observe a high density of collocation points around $p=0$. This seems to be functional to approximating the eigenvalue migration happening there, which was particularly hard to pinpoint for our method. Upon closer inspection of our simulation results, we were able to find the reason for this: the quadrature formula \cref{eq:beyn_quad} can lose accuracy if eigenvalues are too close to the integration contour (see \cite[Corollary 4.8]{Be12}). Accordingly, Beyn's method can sometimes be affected by numerical noise when computing eigenvalues that are about to migrate. In our simulation, this ended up leading to inaccuracies in the construction of one specific (migrating) eigenvalue curve by extrapolation, cf.~\cref{as:exactness} and \cref{rem:migrations}.

For validation, we show the approximation error in \cref{fig:2:err_6} (right). We see that the error landscape is always below the tolerance $\varepsilon$. This positive result shows that our method has the potential of reaching an accuracy much lower than the pragmatic standard $10^{-3}\div 10^{-2}$, even in rather complex applications.

As a final test to investigate how the number of collocation $p$-points varies as the tolerance is reduced, we repeat the above experiment for a collection of tolerance values between $10^{-6}$ and $10^{-1}$. We test two different $p$-interpolation strategies: low-order piecewise-linear interpolation and high-order degree-7 splines. From the results in \cref{fig:2:scaled}, we can observe that, as expected, the number of collocation points increases as the accuracy requirements become stricter. In particular, after a short preasymptotic regime where the two interpolation approaches seem comparable, the sample size increases fairly steeply for piecewise-linear interpolation, but only quite mildly when using splines. It is interesting to note that, in both cases, the number of collocation points roughly scales as one would expect when using the respective interpolation methods to approximate \emph{smooth} functions: the approximation error converges with order $2$ for piecewise-linear interpolation, and with order $8$ for degree-7 splines. This is an extremely positive result, considering that our approximation targets (the eigenvalue curves), due to migrations and bifurcations, are not always smooth.

\begin{figure}[t]
\centering
\includegraphics{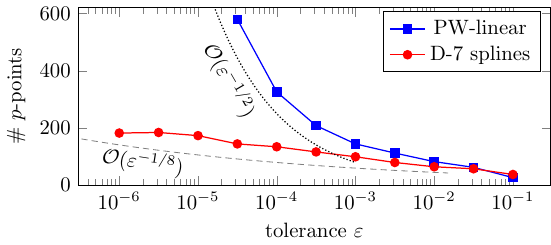}
\caption{Number of collocation $p$-points selected by the adaptive algorithm for various tolerance values. The dotted and dashed lines are the reference scalings $\mathcal{O}(\varepsilon^{-1/2})$ and $\mathcal{O}(\varepsilon^{-1/8})$, which piecewise-linear functions and degree-7 splines achieve in approximating smooth functions, respectively.}
\label{fig:2:scaled}
\end{figure}

Lowering the tolerance further than $10^{-6}$ proved fruitless in our experiments, resulting in a non-converging adaptivity loop. This is related to \cref{as:exactness}, which requires the underlying non-parametric eigenvalue solver to be accurate enough. Essentially, if $\varepsilon$ is too small, the lower accuracy of Beyn's method disrupts the convergence of our approach. A good convergence behavior may be recovered even for small $\varepsilon$, by increasing the number of quadrature points in Beyn's method. Ultimately, we expect our method to be robust enough to reach substantial accuracy (say, $10^{-10}$), as long as the noise in the data is kept under sufficient control. We show this with our last numerical test.

\subsection{Modes of a parametric radio-frequency gun cavity}\label{sec:5:4}

We consider the so-called ``gun'' eigenvalue problem from the \emph{NLEVP} collection \cite{NLEVP}, which models the propagation of electromagnetic waves in a complex 3D cavity. To introduce a parametric dependence, we allow the index of refraction $p$ of the medium filling the cavity (relative to vacuum) to vary in $\mathcal{P}=[1,1.4]$. The resulting problem reads
\begin{equation}\label{eq:gun}
    \vL(\lambda,p)=\vA_0+p^2\lambda\vA_1+\mathrm{i}p\left(\sqrt{\lambda}\vA_2+\sqrt{\lambda-\alpha^2}\vA_3\right),
\end{equation}
where $\alpha=108.8774$ is a constant and $\vA_j\in\bC^{n\times n}$, $j=1,\ldots,4$, are sparse matrices of size $n=9956$ resulting from a 3D FEM-based discretization of Maxwell's equations in the cavity. From a physical standpoint, this formulation is a generalization of \cref{eq:scatter}, with $\lambda$ replacing $\lambda^2$, and with more complex boundary conditions, giving rise to further nonlinearities in $\lambda$. Thanks to its non-intrusiveness, our algorithm can be applied even in this setting without any modifications, despite the more complex $\lambda$-dependence.

For physical reasons, all eigenvalues have positive imaginary part, and the most ``interesting'' eigenvalues are located just to the right of the branch point at $\alpha^2$. As such, we seek eigenvalues in a ball of radius $\rho=1.5\cdot 10^4$ centered at $\overline{\lambda}=3\cdot 10^4$. To show the effectiveness of our method, we set a tolerance of $10^{-6}$ for the adaptive sampling. Note that, due to the large magnitude of the eigenvalues, this corresponds to a very small $\sim3\cdot10^{-11}$ \emph{relative} tolerance. Otherwise, the numerical setup is the same as in the previous section.

\begin{figure}[t]
\centering
\includegraphics{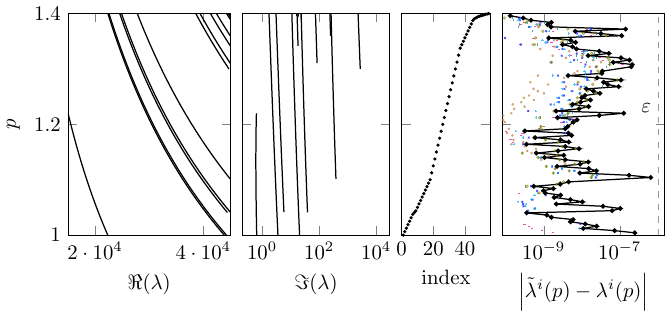}
\caption{From left to right: (1) real and (2) imaginary parts of the approximate eigenvalue curves; (3) adaptively selected collocation points, sorted in increasing order, displayed by index; (4) approximation error in the eigenvalue curves over a uniform grid of 100 parameter values, with different symbols denoting different indices $i$ (the black line is the envelope of the maximum errors).}
\label{fig:4:error}
\end{figure}

The algorithm ends after having selected 55 parameter values as collocation points. In \cref{fig:4:error} (left), we can see how, in this test, except for migrations, the $13$ eigenvalue curves that are present stay smooth and do not display bifurcations. This ultimately helps in keeping the number of collocation points relatively low. The algorithm places a high concentration of collocation points only near $p=1.4$. This is due to two extremely short eigenvalue curves, whose domains are approximately $p\in[1.395, 1.4]$, and which are barely visible near the top-right corner of \cref{fig:4:error} (left). As in the previous experiments, the validation error is uniformly below the tolerance, cf.~\cref{fig:4:error} (right). This is particularly impressive in the present test, due to the extremely low tolerance.

\begin{table}[t]
\centering
\begin{tabular}{c|c|c|c|}
     & \multicolumn{2}{|c|}{Training phase} & Evaluation \\
\cline{2-3}
     & Sampling of & Other steps (match, & at single new \\
     & hi-fi model & flag, adapt, ...) & value of $p$\\
\hline
    Hi-fi model & - & - & $6.04\cdot10^2$ \\
\hline
    Approximation & $6.54\cdot10^4$ & $2.90\cdot10^{-1}$ & $9.15\cdot10^{-3}$ \\
\hline
    \multicolumn{2}{c}{} & Speedup: & $6.60\cdot10^4$ \\
\end{tabular}
\caption{Timing (in seconds) of training and deployment for the ``gun'' problem. The code is available at \texttt{https://github.com/pradovera/pEVP\_match}. The evaluation times (rightmost column) are averaged over 100 sample $p$-points.}
\label{tab:4}
\end{table}

In \cref{tab:4} we look at the timing of the algorithm. In total, it takes about $18$ hours to build the approximation on a standard workstation with a 6-core 3.5-GHz Intel\textsuperscript{\textregistered} processor. By far, most of the time is spent using Beyn's method to solve the non-parametric nonlinear eigenvalue problem, namely, solving linear systems involving $\vL$. When it comes to predicting the eigenvalue curves at a new value of $p$, our approximate model achieves a remarkable speedup of more than four orders of magnitude.

\section{Conclusions}\label{sec:conclusions}

We have presented a novel general-purpose algorithm for solving parametric (nonlinear) eigenvalue problems. Our method relies on high-accuracy solutions of \emph{non-parametric} versions of the problem, and then combines the obtained information over the parameter range. To extend the scope of potential applications of our method, non-parametric eigenvalue solvers based on contour integration are considered. These only require non-intrusive access to the parametric matrix defining the target problem via matrix-vector products. We have showcased the effectiveness of our method with a suite of numerical tests, even involving rather complex parametric dependencies, which would have hindered (or prevented the use of) intrusive approaches.

Many aspects of the modeling-over-$p$ phase require special care. In our discussion, we have focused on the issues related to the prominent and common effects of \emph{eigenvalue migrations} and \emph{bifurcations}. Effective ways of dealing with these issues were presented, allowing our algorithm to achieve remarkably high approximation accuracy. Our proposed adaptive sampling strategy is pivotal to achieve such accuracies, without making any \emph{a priori} assumptions on the behavior of the eigenvalues.

In our presentation, we have focused on the approximation of the parametric eigenvalues, essentially disregarding eigenvectors. Nevertheless, much of what we have discussed can be employed to obtain an approximation of \emph{eigenvector curves}. Notably, eigenvector information can be exploited when performing the ``match'' step, by including information on the eigenvector distance in the cost matrix (see, e.g., \cite{NP21,bertrand_datadriven_2023}). In the effort of generalizing the concepts introduced in this paper to eigenvectors, the main difficulty is related to bifurcations: eigenvectors may lose meaning at bifurcation points, making it necessary to look at eigen\emph{spaces} instead \cite{cont3}. Moreover, the possibility of defective eigenvalues greatly complicates the eigenspace approximation endeavor (and its implementation). We leave this as a direction for further research.

Lastly, we note that our discussion focused on the single-parameter case. This setting is rich enough to justify our work, which can serve as a building block to reach the higher-dimensional case $\mathbf{p}\in\bR^{n_p}$, $n_p>1$. This often arises in practice, especially in uncertainty quantification, where, in fact, one of the most notable applications is ``$n_p=\infty$'' \cite{Andreev2012}. In our opinion, the core of our work (match step, interpolation, adaptivity) may be generalized to more than one parameter. Notably, whenever $n_p>1$, \emph{locally adaptive sparse grids} \cite{NP21} naturally generalize our adaptive sampling and interpolation strategies, without fully incurring the \emph{curse of dimension} as naive tensor sampling would. On the other hand, some of the more technical aspects of our method (especially in the approximation of bifurcations) are not so easy to extend to more parameters. Many interesting open questions remain in this direction, in terms of both modeling and implementation.

\bibliographystyle{siamplain}
\bibliography{references}

\begin{thebibliography}{10}

\bibitem{Andreev2012}
{\sc R.~Andreev and C.~Schwab}, {\em Sparse tensor approximation of parametric
  eigenvalue problems}, in Numerical {Analysis} of {Multiscale} {Problems},
  I.~G. Graham, T.~Y. Hou, O.~Lakkis, and R.~Scheichl, eds., Springer Berlin,
  Heidelberg, 2011, pp.~203--241,
  \url{https://doi.org/10.1007/978-3-642-22061-6\_7}.

\bibitem{Asetal10}
{\sc J.~Asakura, T.~Sakurai, H.~Tadano, T.~Ikegami, and K.~Kimura}, {\em A
  numerical method for polynomial eigenvalue problems using contour integral},
  Japan Journal of Industrial and Applied Mathematics, 27 (2010), pp.~73--90,
  \url{https://doi.org/10.1007/s13160-010-0005-x}.

\bibitem{bertrand_datadriven_2023}
{\sc F.~Bertrand, D.~Boffi, and A.~Halim}, {\em Data-driven reduced order
  modeling for parametric {PDE} eigenvalue problems using {Gaussian} process
  regression}, Journal of Computational Physics, 495 (2023),
  \url{https://doi.org/10.1016/j.jcp.2023.112503}.

\bibitem{NLEVP}
{\sc T.~Betcke, N.~J. Higham, V.~Mehrmann, C.~Schröder, and F.~Tisseur}, {\em
  {NLEVP}: {A} collection of nonlinear eigenvalue problems}, ACM Transactions
  on Mathematical Software, 39 (2013),
  \url{https://doi.org/10.1145/2427023.2427024}.

\bibitem{Be12}
{\sc W.-J. Beyn}, {\em An integral method for solving nonlinear eigenvalue
  problems}, Linear Algebra and its Applications, 436 (2012), pp.~3839--3863,
  \url{https://doi.org/10.1016/j.laa.2011.03.030}.

\bibitem{cont2}
{\sc W.~J. Beyn, C.~Effenberger, and D.~Kressner}, {\em Continuation of
  eigenvalues and invariant pairs for parameterized nonlinear eigenvalue
  problems}, Numerische Mathematik, 119 (2011), pp.~489--516,
  \url{https://doi.org/10.1007/s00211-011-0392-1}.

\bibitem{cont3}
{\sc D.~Bindel, J.~Demmel, and M.~Friedman}, {\em Continuation of invariant
  subspaces in large bifurcation problems}, SIAM Journal on Scientific
  Computing, 30 (2008), pp.~637--656, \url{https://doi.org/10.1137/060654219}.

\bibitem{spectrum}
{\sc F.~Bonizzoni, F.~Nobile, I.~Perugia, and D.~Pradovera}, {\em Least-squares
  {Pad{\'{e}}} approximation of parametric and stochastic {Helmholtz} maps},
  Advances in Computational Mathematics, 46 (2020),
  \url{https://doi.org/10.1007/s10444-020-09749-3}.

\bibitem{Bretal23}
{\sc M.~C. Brennan, M.~Embree, and S.~Gugercin}, {\em Contour integral methods
  for nonlinear eigenvalue problems: a systems theoretic approach}, SIAM
  Review, 65 (2023), pp.~439--470, \url{https://doi.org/10.1137/20M1389303}.

\bibitem{proj3}
{\sc A.~G. Buchan, C.~C. Pain, F.~Fang, and I.~M. Navon}, {\em A {POD}
  reduced-order model for eigenvalue problems with application to reactor
  physics}, International Journal for Numerical Methods in Engineering, 95
  (2013), pp.~1011--1032, \url{https://doi.org/10.1002/nme.4533}.

\bibitem{Caliskan09}
{\sc S.~Y. {\c{C}}alişkan and H.~{\"{O}}zbay}, {\em Stability analysis of the
  heat equation with time-delayed feedback}, IFAC Proceedings Volumes, 42
  (2009), pp.~220--224, \url{https://doi.org/10.3182/20090616-3-IL-2002.00038}.

\bibitem{Crouse2016}
{\sc D.~F. Crouse}, {\em On implementing {2D} rectangular assignment
  algorithms}, IEEE Transactions on Aerospace and Electronic Systems, 52
  (2016), pp.~1679--1696, \url{https://doi.org/10.1109/TAES.2016.140952}.

\bibitem{GaMiPo18}
{\sc B.~Gavin, A.~Miedlar, and E.~Polizzi}, {\em {FEAST} eigensolver for
  nonlinear eigenvalue problems}, Journal of Computational Science, 27 (2018),
  pp.~107--117, \url{https://doi.org/j.jocs.2018.05.006}.

\bibitem{GuTi17}
{\sc S.~G{\"{u}}ttel and F.~Tisseur}, {\em The nonlinear eigenvalue problem},
  Acta Numerica, 26 (2017), \url{https://doi.org/10.1017/S0962492917000034}.

\bibitem{hakula_approximate_2015}
{\sc H.~Hakula, V.~Kaarnioja, and M.~Laaksonen}, {\em Approximate methods for
  stochastic eigenvalue problems}, Applied Mathematics and Computation, 267
  (2015), pp.~664--681, \url{https://doi.org/10.1016/j.amc.2014.12.112}.

\bibitem{Hi08}
{\sc N.~J. Higham}, {\em Functions of matrices}, Society for Industrial and
  Applied Mathematics, Philadelphia, 2008,
  \url{https://doi.org/10.1137/1.9780898717778}.

\bibitem{Kato}
{\sc T.~Kato}, {\em Perturbation theory for linear operators}, vol.~132,
  Springer Berlin, Heidelberg, 1995,
  \url{https://doi.org/10.1007/978-3-642-66282-9}.

\bibitem{Hungarian}
{\sc H.~W. Kuhn}, {\em The {Hungarian} method for the assignment problem},
  Naval Research Logistics Quarterly, 2 (1955), pp.~83--97,
  \url{https://doi.org/10.1002/nav.3800020109}.

\bibitem{Lancaster}
{\sc P.~Lancaster}, {\em On eigenvalues of matrices dependent on a parameter},
  Numerische Mathematik, 6 (1964), pp.~377--387,
  \url{https://doi.org/10.1007/BF01386087}.

\bibitem{shape2}
{\sc T.~Lassila and G.~Rozza}, {\em Parametric free-form shape design with
  {PDE} models and reduced basis method}, Computer Methods in Applied Mechanics
  and Engineering, 199 (2010), pp.~1583--1592,
  \url{https://doi.org/10.1016/j.cma.2010.01.007}.

\bibitem{shape1}
{\sc Y.~Maday and E.~M. Rønquist}, {\em A reduced-basis element method},
  Journal of Scientific Computing, 17 (2002), pp.~447--459,
  \url{https://doi.org/10.1023/A:1015197908587}.

\bibitem{subspace1}
{\sc K.~Meerbergen, C.~Schröder, and H.~Voss}, {\em A {J}acobi–{D}avidson
  method for two-real-parameter nonlinear eigenvalue problems arising from
  delay-differential equations}, Numerical Linear Algebra with Applications, 20
  (2013), pp.~852--868, \url{https://doi.org/10.1002/nla.1848}.

\bibitem{MoBuOv97}
{\sc J.~Moro, J.~V. Burke, and M.~L. Overton}, {\em On the
  {Lidskii}--{Vishik}--{Lyusternik} perturbation theory for eigenvalues of
  matrices with arbitrary {Jordan} structure}, SIAM Journal on Matrix Analysis
  and Applications, 18 (1997), pp.~793--817,
  \url{https://doi.org/10.1137/S0895479895294666}.

\bibitem{NP21}
{\sc F.~Nobile and D.~Pradovera}, {\em Non-intrusive double-greedy parametric
  model reduction by interpolation of frequency-domain rational surrogates},
  ESAIM: Mathematical Modelling and Numerical Analysis, 55 (2021),
  pp.~1895--1920, \url{https://doi.org/10.1051/m2an/2021040}.

\bibitem{Pl2016}
{\sc B.~Plestenjak}, {\em Numerical methods for nonlinear two-parameter
  eigenvalue problems}, BIT Numerical Mathematics, 56 (2016), pp.~241--262,
  \url{https://doi.org/10.1007/s10543-015-0566-9}.

\bibitem{Po09}
{\sc E.~Polizzi}, {\em Density-matrix-based algorithm for solving eigenvalue
  problems}, Physics Review B, 79 (2009), p.~115112,
  \url{https://doi.org/10.1103/PhysRevB.79.115112}.

\bibitem{pradovera_adaptive_2022}
{\sc D.~Pradovera}, {\em Adaptive approximation of nonlinear eigenproblems by
  minimal rational interpolation}, Proceedings in Applied Mathematics and
  Mechanics, 22 (2023), \url{https://doi.org/10.1002/pamm.202200032}.

\bibitem{Rellich}
{\sc F.~Rellich}, {\em Perturbation theory of eigenvalue problems}, Gordon and
  Breach, New York, 1969.

\bibitem{subspace2}
{\sc K.~Ruymbeek, K.~Meerbergen, and W.~Michiels}, {\em Subspace method for
  multiparameter-eigenvalue problems based on tensor-train representations},
  Numerical Linear Algebra with Applications, 29 (2022),
  \url{https://doi.org/10.1002/nla.2439}.

\bibitem{SaSu03}
{\sc T.~Sakurai and H.~Sugiura}, {\em A projection method for generalized
  eigenvalue problems using numerical integration}, Journal of Computational
  and Applied Mathematics, 159 (2003), pp.~119--128,
  \url{https://doi.org/10.1016/S0377-0427(03)00565-X}.

\bibitem{subspace3}
{\sc P.~Sirkovi{\'{c}} and D.~Kressner}, {\em Subspace acceleration for
  large-scale parameter-dependent hermitian eigenproblems}, SIAM Journal on
  Matrix Analysis and Applications, 37 (2015),
  \url{https://doi.org/10.1137/15M1017181}.

\bibitem{subspace5}
{\sc R.~Van~Beeumen, E.~Jarlebring, and W.~Michiels}, {\em A rank-exploiting
  infinite arnoldi algorithm for nonlinear eigenvalue problems}, Numerical
  Linear Algebra with Applications, 23 (2016), pp.~607--628,
  \url{https://doi.org/10.1002/nla.2043}.

\bibitem{Xi21}
{\sc H.~Xie}, {\em A perturbation-based method for a parameter-dependent
  nonlinear eigenvalue problem}, Numerical Linear Algebra with Applications, 28
  (2021), p.~e2355, \url{https://doi.org/10.1002/nla.2355}.

\end{thebibliography}
\end{document}